\let\emptyset\varnothing
\newtheorem{theorem}{Theorem}[section]
\newtheorem{lemma}[theorem]{Lemma}
\newtheorem{proposition}[theorem]{Proposition}
\newtheorem{corollary}[theorem]{Corollary}
\theoremstyle{definition}
\newtheorem{definition}[theorem]{Definition}
\newtheorem{example}[theorem]{Example}
\newtheorem{remark}[theorem]{Remark}
\theoremstyle{remark}
\numberwithin{equation}{section}
\DeclareMathOperator{\qNor}{qNor}
\newcommand{\calZ}{\mathcal{Z}}
\let\epsilon\varepsilon
\DeclareMathOperator{\Dist}{Dist}
\DeclareMathOperator{\Radius}{Radius}
\newcommand{\Link}{\textrm{Link}}
\newcommand{\Star}{\textrm{Star}}
\newcommand{\Nor}{\textrm{Nor}}
\newcommand{\Int}{\mathrm{Int}}
\newcommand{\barotimes}{\overline{\otimes}}
\title[Internal graphs of graph products of hyperfinite II$_1$-factors]{Internal graphs of graph products of hyperfinite II$_1$-factors}
\date{\noindent \today.  MSC2010 keywords:  46L51, 46L54.  MC is  supported by the NWO Vidi grant VI.Vidi.192.018 `Non-commutative harmonic analysis and rigidity of operator algebras'.}
\author[Martijn Caspers]{Martijn Caspers}
\author[Enli Chen]{Enli Chen}
\address{TU Delft, EWI/DIAM,
	P.O.Box 5031,
	2600 GA Delft,
	The Netherlands}
\email{M.P.T.Caspers@tudelft.nl}
\email{E.Chen-1@tudelft.nl}
\begin{document}

\maketitle

\begin{abstract}
In this paper, we show that for a graph $\Gamma$ from a class named H-rigid graphs, its subgraph $\Int(\Gamma)$, named the internal graph of $\Gamma$, is an isomorphism invariant of the graph product of hyperfinite II$_1$-factors $R_{\Gamma}$. In particular, we can  classify $R_{\Gamma}$ for some typical types of graphs,   including lines, cyclic graphs and infinite regular trees. As an application, we also show that for two isomorphic graph products of hyperfinite II$_1$-factors over H-rigid graphs, the difference of the radius between the two graphs will not be larger than 1. Our proof is based on the recent resolution of the Peterson-Thom conjecture. 
\end{abstract}

\section{Introduction}
Graph products of von Neumann algebras were introduced by M\l{}otkowski \cite{mlotkowskiLfreeProbability2004}, as well as Fima together with the first author \cite{caspersGraphProductsOperator2017}. It associates to a graph with a $\sigma$-finite von Neumann algebra labeled by every edge, a new von Neumann algebra that contains all the vertex von Neumann algebras and these vertex von Neumann algebras commute (resp. are freely independent) if and only if the vertices share an edge (resp. do not share an edge). The construction generalizes tensor products, in case of complete graphs, and free products, in case of graphs without edges. 
Graph products have been studied in the context of Popa's deformation/rigidity theory in \cite{ BCC,caspersGraphProductsOperator2017, charlesworthStructureGraphProduct2024,ChifanDaviesDrimbeI, ChifanDaviesDrimbeII}.  This leads to rigidity theorems for graph products with specific structure of the graphs and von Neumann algebras. 

In particular, in \cite{BCC} we established a rigidity theorem for graph products of a class of non-amenable  II$_1$-factors  (Theorem A in \cite{BCC}). In order to do this we introduced the notion of a `rigid graph'.   We then show that for graph products over rigid graphs of this class of non-amenable II$_1$-factors, the rigid graphs are  isomorphism invariants. In \cite{BCC}, one of the key steps of the proof for this rigidity result is to use an embedding theorem for graph product von Neumann algebras (Theorem I in \cite{BCC}). This embedding theorem  only can be applied when the II$_1$-factors for all vertices are  non-amenable. Therefore, in \cite{BCC}, the non-amenablity of the II$_1$-factors is crucial for the proof of the rigidity theorem. 

 It is quite natural to ask if such rigidity theorem also holds for graph products of the amenable II$_1$-factor $R$. That is to say, for two graphs $\Gamma$ and $\Lambda$, if for the graph products we have $R_{\Gamma}\simeq R_{\Lambda}$, then can we show that $\Gamma\simeq \Lambda$, or at least some parts of the two graphs are isomorphic? First we consider two extreme cases, i.e. when the graphs are complete or without edges. When the graphs are complete, the graph products become tensor products. Since $R\barotimes R\simeq R$, we cannot distinguish any two  complete graphs by their graph product of hyperfinite II$_1$-factors. When the graphs have no edges, the graph products become  free products. Then this problem is equivalent  to the free factor problem and is therefore very hard and outside the scope of our paper. On the other hand, we note that it is simply not true that one can distinguish the graph products of hyperfinite II$_1$-factors over two non-isomorphic graphs beyond tensor products. For instance, considering two non-isomorphic bipartite graphs $K_{3,3}$ and $K_{2,5}$, by the R{\u a}dulescu amplifcation formula (formula (0.2) in \cite{Radulescu94}), we have 
 \[
 R_{K_{3,3}}\simeq\mathcal{L}(\mathbb{F}_3)\barotimes\mathcal{L}(\mathbb{F}_3)\simeq \mathcal{L}(\mathbb{F}_3)^{\sqrt{2}}\barotimes\mathcal{L}(\mathbb{F}_3)^{1/\sqrt{2}}\simeq\mathcal{L}(\mathbb{F}_2)\barotimes\mathcal{L}(\mathbb{F}_5)\simeq R_{K_{2,5}},
 \]
 where $\mathcal{L}(\mathbb{F}_n)$ is the group von Neumann algebra of the free group with $n\in \mathbb{N}_{\geq 2}$ generators and $M^t$ denotes the amplification of a II$_1$-factor $M$ with exponent $t \in \mathbb{R}_{>0}$.

 In this paper, we will introduce a class of non-trival graphs, named H-rigid graphs. Then we show that for the graph product of hyperfinite II$_1$-factors $R_{\Gamma}$ over the graph $\Gamma$ from this class, the subgraph $\Int(\Gamma)$ of $\Gamma$  is an isomorphism invariant   (\cref{thm4.7.}). Here $\Int(\Gamma)$ is the subgraph of interior vertices of $\Gamma$, i.e. vertices   whose neighbors do not form a complete graph.     The novelty of our proof for this rigidity result is that it uses the recent celebrated resolution of the Peterson-Thom conjecture \cite{PetersonThom}. This conjecture was solved through Hayes' reduction to a random matrix problem \cite{hayesRandomMatrixApproach2022} that was eventually solved by Belinschi-Capitaine \cite{BelinschiCapitaine22} and Bordenave-Collins \cite{BordenaveCollins23}. Remarkable applications have already been found in \cite{HayesJekelElayavalli}. As a further application, we obtain that for the graph products of hyperfinite II$_1$-factors over some specific types of graphs, like lines $l_n$, cyclic graphs $\mathbb{Z}_n$ or   infinite regular trees, the graphs are isomorphism invariants (\cref{coro4.10}). In other words, we can  classify the II$_1$-factors $R_{l_n}$, $R_{\mathbb{Z}_n}$  and $R_{\tilde{\mathcal{T}}}$ ($\tilde{\mathcal{T}}$ are infinite regular trees). This result also partially answers Conjecture 5.10.5 in \cite{MatthijsBorstThesis}. 
 
 In \cite{BCC} we established a graph radius rigidity result for graph products of group von Neumann algebras of icc groups (Theorem F in \cite{BCC}). We showed that if two graph products of group von Neumann algebras are isomorphic, then the difference between the radius of two graphs will not be larger than 2. In this paper, we will use our new obtained rigidity result to strengthen this graph radius rigidity result for   graph products of hyperfinite II$_1$-factors. We show that the radius difference between two isomorphic graph products of II$_1$-factors over H-rigid graphs must not be larger than 1 (\cref{coro4.11}).

\section{Preliminaries}

For standard theory on von Neumann algebras, we refer to the books \cite{stratilaLecturesNeumannAlgebras2019,Takesaki1}. For  von Neumann algebras $M$ and $N$ we use $M_\ast$ for the predual, $M'$ for the commutant, $M \otimes_{alg} N$ for the algebraic tensor product and $M \overline{\otimes} N = (M \otimes_{alg} N )''$ for the von Neumann algebraic tensor product.   We say that a von Neumann algebra $M$ is finite if it admits a faithful normal tracial state $\tau$. We also refer to the pair $(M, \tau)$ as a tracial von Neumann algebra.   We call $M$ diffuse if $M$ does not contain non-zero minimal projections. A von Neumann subalgebra is always assumed to contain the unit of the larger algebra.

The following is well-known but we have not found its statement in the literature. 

\begin{lemma}\label{Lem=Slicing}
Suppose $M$ and $N$ are two von Neumann algebras, and $A\subseteq M $ and $B\subseteq N$ are von Neumann subalgebras. Let $x\in M\barotimes N$. If for any $\omega\in N_*$, $(id\otimes \omega)(x)\in A$, and for any $\omega'\in M_*$, $(\omega'\otimes id)(x)\in B$, then $x\in A\barotimes B$. 
\end{lemma}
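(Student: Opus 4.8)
The plan is to reduce everything to the commutation theorem for von Neumann algebraic tensor products together with the elementary properties of slice maps. First I would fix faithful normal representations $M \subseteq B(H)$ and $N \subseteq B(K)$, so that $M \barotimes N \subseteq B(H) \barotimes B(K) = B(H \otimes K)$, and I would write $A'$ and $B'$ for the commutants of $A$ in $B(H)$ and of $B$ in $B(K)$. I would then recall the right and left slice maps: for $\omega \in N_*$ there is a unique normal linear map $\id \otimes \omega \colon B(H) \barotimes N \to B(H)$ with $(\id \otimes \omega)(S \otimes T) = \omega(T) S$, and it is a $B(H)$-bimodule map, i.e. $(\id \otimes \omega)\big((S \otimes 1)\, y\, (S' \otimes 1)\big) = S\,(\id \otimes \omega)(y)\, S'$; symmetrically one has $\omega' \otimes \id \colon M \barotimes B(K) \to B(K)$ for $\omega' \in M_*$. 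Moreover the family $\{\id \otimes \omega : \omega \in N_*\}$ separates the points of $B(H) \barotimes N$, because the functionals $\rho \circ (\id \otimes \omega)$, with $\rho \in B(H)_*$ and $\omega \in N_*$, have norm-dense span in $(B(H) \barotimes N)_*$; likewise for the other leg.

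The heart of the argument is to show that $x$ commutes with $A' \otimes 1$ and with $1 \otimes B'$. Fix $a' \in A'$. Since $x \in M \barotimes N \subseteq B(H) \barotimes N$ and $a' \otimes 1 \in B(H) \barotimes N$, the commutator $z := (a' \otimes 1) x - x (a' \otimes 1)$ lies in $B(H) \barotimes N$, and for every $\omega \in N_*$ the bimodule identity gives
\[
(\id \otimes \omega)(z) = a'\,(\id \otimes \omega)(x) - (\id \otimes \omega)(x)\, a' = 0,
\]
because $(\id \otimes \omega)(x) \in A$ by hypothesis and $a' \in A'$. As the slice maps separate points, $z = 0$, i.e. $x$ commutes with $a' \otimes 1$; since $a' \in A'$ was arbitrary, $x$ commutes with $A' \otimes 1$. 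The symmetric computation with $\omega' \otimes \id$, using the hypothesis $(\omega' \otimes \id)(x) \in B$ for all $\omega' \in M_*$, shows that $x$ commutes with $1 \otimes B'$.

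Finally, the von Neumann algebra generated by $A' \otimes 1$ and $1 \otimes B'$ is exactly $A' \barotimes B'$ (as $a' \otimes b' = (a' \otimes 1)(1 \otimes b')$), and by the commutation theorem for tensor products $A' \barotimes B' = (A \barotimes B)'$ inside $B(H \otimes K)$. Hence $x$ commutes with $(A \barotimes B)'$, so $x \in (A \barotimes B)'' = A \barotimes B$. The argument is essentially routine; the only points that require a little care are that the relevant commutators need not lie in $M \barotimes N$ (but they do lie in $B(H) \barotimes N$, respectively $M \barotimes B(K)$, on which the slice maps are still defined and still separate points), and that one should invoke the commutation theorem $(A \barotimes B)' = A' \barotimes B'$ rather than attempt to identify $(A \barotimes N) \cap (M \barotimes B)$ with $A \barotimes B$ by a direct computation. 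Equivalently, one may first record the slice-map descriptions $A \barotimes N = \{x : (\id \otimes \omega)(x) \in A \text{ for all } \omega \in N_*\}$ and $M \barotimes B = \{x : (\omega' \otimes \id)(x) \in B \text{ for all } \omega' \in M_*\}$, and then intersect them using $M' \subseteq A'$ and $N' \subseteq B'$; this is the same proof repackaged.
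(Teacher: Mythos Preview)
Your proof is correct and follows essentially the same route as the paper: show via the bimodule property of slice maps that $x$ commutes with $A'\otimes 1$ and $1\otimes B'$, and then invoke the tensor commutation theorem $(A'\barotimes B')'=A\barotimes B$. You are in fact more careful than the paper about where the commutants are taken and about extending the slice maps to $B(H)\barotimes N$ (resp.\ $M\barotimes B(K)$), which is exactly the technical point needed to make the argument rigorous.
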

\begin{proof}
For $a\in A'$ and $\omega \in N_\ast$,  we have 
$$(id\otimes \omega)(x(a\otimes 1))=(id\otimes\omega)(x)a=a (id\otimes\omega)(x)=(id\otimes \omega)((a\otimes 1)x).$$
Then we have 
$x(a\otimes 1)=(a\otimes 1)x$. Similarlly we have $x(1\otimes b)=(1\otimes b)x$, therefore $x(a\otimes b)=x(a\otimes 1)(1\otimes b)=(a\otimes 1)(1\otimes b)x=(a\otimes b)x$. The last sentence yields that $x\in(A'\otimes_{alg}B')'=(A'\barotimes B')'=(A\barotimes B)''=A\barotimes B$, where the second equality is   \cite[Theorem IV.5.9]{Takesaki1}.
\end{proof}

\subsection{Normalizers, strong solidity and quasi-strong solidity}
Let $M$ be a finite von Neumann algebra.  Let $A \subseteq M$ 
be a von Neumann subalgebra of $M$. We set
\[
\begin{split}
\Nor_M(A) = & \{ u \in M \mid u \textrm{ unitary, } u A u^\ast = A \},   \\
\qNor^1_M(A) = & \{ x \in M \mid \exists x_1, \ldots, x_n \in M \textrm{ such that }  A x = \sum_{i=1}^n x_i A \}, \\
\qNor_M(A) = & \qNor^1_M(A) \cap  \qNor^1_M(A)^\ast,
\end{split}
\]
which are called the normalizer, one-sided quasi-normalizer and quasi-normalizer respectively. Note that $\Nor_M(A)$ is a group, $\qNor^1_M(A)$ is an algebra, and $\qNor_M(A)$ is a $\ast$-algebra. Further, we have inclusions $\Nor_M(A)\subseteq \qNor_M(A)\subseteq \qNor^1_M(A)$.

\begin{definition}[Strong solidity and quasi-strong solidity] A finite von Neumann algebra $M$ is called \textbf{strongly solid} if for any diffuse amenable  von Neumann subalgebra $A\subseteq M$, $\Nor_M(A)''$ is amenable.  $M$ is called \textbf{quasi-strongly solid} if for any diffuse amenable  von Neumann subalgebra $A\subseteq M$, $\qNor_M(A)''$ is amenable. 
    
\end{definition}
We remark that every quasi-strongly solid von Neumann algebra is strongly solid.

\subsection{Popa's intertwining-by-bimodule theory}
We recall the following definition due to Popa \cite{Popa2006a}, \cite{Popa2006b}. In this section we assume $M$ is a finite von Neumann algebra.

\begin{definition}[Embedding $A\prec_M B$] For two von Neumann subalgebras $A,B \subseteq M$, we will say that a corner of $A$ embeds in $B$ inside $M$ (denoted by $A\prec_M B$), if there exist projections $p\in A, q\in B$, a normal $\ast$-homomorphism $\theta:pAp\to qBq$ and a non-zero partial isometry $v\in qMp$ such that  $\theta(x)v=vx$ for all $x\in pAp$.
    
\end{definition}
\begin{definition}[Stable embedding $A\prec^s_M B$]
    For two von Neumann subalgebras $A,B \subseteq M$, we will say that $A$ embeds stably in $B$ inside $M$ (denoted by $A\prec_M^s B$) if for any projection $r\in A'\cap M$, we have $Ar\prec_M B$.
\end{definition}

\begin{lemma} [Lemma 2.4 in \cite{drimbePrimeII1Factors2019}, see also \cite{vaesExplicitComputationsAll2008}]\label{Lem=StableEmbedding}
    Let $(M, \tau)$ be a tracial von Neumann algebra and let $P,Q,R\subseteq M$ be von Neumann subalgebras. Then the following hold:
    \begin{enumerate}
        \item \label{Item=StableEmbedding:transative}Assume that $P\prec_M Q$ and $Q\prec_M^s R$. Then $P\prec_M R$;
        \item \label{Item=StableEmbedding:condition} Assume that, for any non-zero projection $z\in \Nor_{M}(P)'\cap M\subseteq \calZ(P'\cap M)$, we have $Pz\prec_M Q$. Then $P\prec_M^s Q$.
   \end{enumerate}
    In particular, we note that if $Q'\cap M$ is a factor and $P\prec_{M}Q$ and $Q\prec_{M} R$ then $P\prec_{M} R$.
\end{lemma}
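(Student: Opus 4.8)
The plan is to work throughout with Popa's bimodule reformulation of intertwining: for von Neumann subalgebras $X,Y$ of a finite von Neumann algebra $M$, one has $X\prec_M Y$ if and only if $L^2(M)$ contains a nonzero $X$-$Y$-subbimodule $\mathcal H$ that is finitely generated as a right $Y$-module, and such an $\mathcal H$ may be chosen of the form $\mathcal H=\sum_{j=1}^n\overline{\eta_j Y}$ with $\eta_1,\dots,\eta_n\in M$. From this I use two bookkeeping facts, both immediate from the picture: (A) if $r\in X'\cap M$ is a projection with central support $z(r)\in\calZ(X'\cap M)$, then $Xr\prec_M Y\iff Xz(r)\prec_M Y$; and (B) if $z_1,z_2\in\calZ(X'\cap M)$ are orthogonal projections, then $X(z_1+z_2)\prec_M Y$ holds iff $Xz_1\prec_M Y$ or $Xz_2\prec_M Y$ (a witnessing bimodule splits as the direct sum of its cut-downs by $z_1$ and $z_2$, and a right-$Y$-submodule of a finitely generated one is again finitely generated). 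Fact (B), together with a routine normality argument, shows that the set $\{z\in\calZ(X'\cap M):z\text{ a projection},\ Xz\not\prec_M Y\}$ is closed under arbitrary suprema, hence has a largest element $z_{\max}(X,Y)$; and by (A) one has $X\prec^s_M Y$ if and only if $z_{\max}(X,Y)=0$. This last equivalence is the engine behind both parts.

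For part (2), first observe that $\Nor_M(P)'\cap M\subseteq\calZ(P'\cap M)$: every unitary of $P'\cap M$ normalizes $P$, so $P'\cap M\subseteq\Nor_M(P)''$, and since also $P\subseteq\Nor_M(P)''$ we get $\Nor_M(P)'\cap M\subseteq(P'\cap M)\cap(P'\cap M)'=\calZ(P'\cap M)$. Now put $z_0:=z_{\max}(P,Q)$ and suppose $z_0\neq0$; I will contradict the hypothesis by showing $z_0\in\Nor_M(P)'\cap M$. For $u\in\Nor_M(P)$, conjugation by $u$ preserves $P'\cap M$ and hence $\calZ(P'\cap M)$, and because $u^\ast Pu=P$ one has $P(uzu^\ast)=u(Pz)u^\ast$ for every projection $z\in\calZ(P'\cap M)$; transporting through $u$ the partial isometry and $\ast$-homomorphism witnessing an intertwining gives $u(Pz)u^\ast\prec_M Q\iff Pz\prec_M Q$. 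Hence the family defining $z_{\max}(P,Q)$ is invariant under $z\mapsto uzu^\ast$, so $uz_0u^\ast=z_0$ for all $u\in\Nor_M(P)$, i.e.\ $z_0\in\Nor_M(P)'\cap M$. Since $Pz_0\not\prec_M Q$ by definition of $z_{\max}$, this contradicts the hypothesis of (2); therefore $z_0=0$ and $P\prec^s_M Q$.

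For part (1), take from $P\prec_M Q$ a $P$-$Q$-subbimodule $\mathcal H=\sum_{j=1}^n\overline{\eta_j Q}\subseteq L^2(M)$ with $\eta_j\in M$, and let $z\in\calZ(Q)\subseteq Q'\cap M$ be its central right-support (the least central projection of $Q$ with $\overline{\mathcal H z}=\mathcal H$). Applying $Q\prec^s_M R$ to the projection $z$ yields $Qz\prec_M R$, i.e.\ a nonzero $Qz$-$R$-subbimodule $\mathcal G\subseteq L^2(M)$, finitely generated over $R$, whose left action factors through $Qz$; moreover, since the stable hypothesis produces such a $\mathcal G$ for every nonzero central sub-projection of $z$, one may arrange the left support of $\mathcal G$ to match $z$. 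The multiplication map $\mathcal H\otimes_Q\mathcal G\to L^2(M)$, $\eta_j\otimes\xi\mapsto\eta_j\xi$, is then bounded (the $\eta_j$ lie in $M$), its image is a $P$-$R$-subbimodule of $L^2(M)$ that is finitely generated over $R$ (being a quotient of $\bigoplus_j\mathcal G$), and it is nonzero because the right support of $\mathcal H$ matches the left support of $\mathcal G$; hence $P\prec_M R$. The main obstacle is precisely this nonvanishing step: it is where one genuinely needs $Q\prec^s_M R$ rather than only $Q\prec_M R$, since a priori the $Q$-$R$-intertwining could be carried by a corner of $Q$ disjoint from the part $z$ that $\mathcal H$ sees, in which case the composition would collapse. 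Finally, the ``in particular'' clause follows: if $Q'\cap M$ is a factor then $\calZ(Q'\cap M)=\CC1$, so $z_{\max}(Q,R)\in\{0,1\}$; the hypothesis $Q\prec_M R$ says exactly $Q\cdot1\prec_M R$, ruling out $z_{\max}(Q,R)=1$, so $z_{\max}(Q,R)=0$, i.e.\ $Q\prec^s_M R$, and part (1) gives $P\prec_M R$.
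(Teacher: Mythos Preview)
The paper does not prove this lemma; it quotes it from \cite{drimbePrimeII1Factors2019} (see also \cite{vaesExplicitComputationsAll2008}) without argument, so there is no in-paper proof to compare against.

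Your proofs of (2) and of the ``in particular'' clause are correct and follow the standard line: the existence of the maximal projection $z_{\max}(X,Y)\in\calZ(X'\cap M)$ with $Xz_{\max}\not\prec_M Y$, and its invariance under conjugation by $\Nor_M(P)$, are exactly the right ingredients.

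There is, however, a genuine gap in your treatment of (1). You take the support projection $z$ in $\calZ(Q)$ and apply the stable hypothesis only to central sub-projections of $z$, then assert that matching the central $Q$-supports of $\mathcal H$ and $\mathcal G$ forces the multiplication map $\mathcal H\otimes_Q\mathcal G\to L^2(M)$ to be nonzero. This inference fails. For a concrete obstruction, take $N$ a II$_1$-factor, $M=M_2(\CC)\barotimes N$, $Q=1\otimes N$ (so $\calZ(Q)=\CC 1$), $\eta=e_{12}\otimes 1$, and any left-$Q$-faithful $\mathcal G\subseteq(e_{11}\otimes 1)L^2(M)$; both central $Q$-supports equal $1$, yet $\eta\,\mathcal G=0$.

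The stable hypothesis is precisely what rescues this, but one must invoke it for a projection in $Q'\cap M$, not merely in $\calZ(Q)$. A clean choice: let $r\in Q'\cap M$ be the projection determined by
\[
(1-r)L^2(M)=\{\xi\in L^2(M):\eta_j q\,\xi=0\text{ for all }j\text{ and all }q\in Q\};
\]
this set is a $Q$-$M$-subbimodule of $L^2(M)$, hence of that form, and $r\neq 0$ since $\mathcal H\neq 0$. Applying $Q\prec^s_M R$ to $r$ yields a nonzero $Qr$-$R$-subbimodule $\mathcal G\subseteq rL^2(M)$ finitely generated over $R$; any nonzero $\xi\in\mathcal G$ lies outside $(1-r)L^2(M)$, so $\eta_j q\,\xi\neq 0$ for some $j$ and some $q\in Q$, and since $q\xi\in\mathcal G$ the multiplication map is nonzero. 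With this correction your bimodule-composition argument goes through and matches the approach in the cited references.
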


\subsection{Simple graphs}
Let $\Gamma$ be a \textit{simple graph}, i.e. an undirected graph without double edges and without self-loops. We denote the vertex set of $\Gamma$ again by $\Gamma$. We write $v\in \Gamma$ for saying that $v$ is a vertex of $\Gamma$, and write $\Lambda\subseteq \Gamma$ for saying that $\Lambda$ is a subgraph of $\Gamma$ in case the vertex set of $\Lambda$ is a subset of the vertex set of $\Gamma$ and two vertices in $\Lambda$ share an edge if and only if they share an edge in $\Gamma$. For $v\in \Gamma$, we set two subgraphs
\begin{align}
    &\Link(v)=\{w\in \Gamma\mid \text{$v$ and $w$ share an edge}\},
    \\&\Star(v)=\{ v\}\cup \Link(v).
\end{align}
For $\Lambda\subseteq \Gamma$, we set $\Link(\Lambda)=\bigcap_{v\in \Lambda}\Link(v)$ and by convention we set $\Link(\emptyset)=\Gamma$. We denote $|\Gamma|$ the size of the graph, i.e. the number of vertices. We call a graph $\Gamma$ \textbf{connected} if it is non-empty and there exists a path between any two different vertices $v,w\in \Gamma$. A connected component of a graph $\Gamma$ is a subgraph $\Lambda\subseteq \Gamma$ that is connected and satisfies for any $v\in \Lambda$, $\Link(v)\subseteq \Lambda$. We call a graph $\Gamma$ \textbf{complete} if any two vertices in $\Gamma$ share an edge.
\begin{definition}
    We call a simple graph $\Gamma$ \textbf{locally finite}, if for every $v\in \Gamma$, $\Link(v)$ is finite.
\end{definition}
\begin{remark}
    Locally finite simple graphs have countably many vertices. 
\end{remark}
 \subsection{Particular graphs} 
 We denote $l_n, 2 \leq n < \infty$ for the finite line, i.e. the graph consisting of $n$ vertices labeled by $1, 2, \ldots, n$, and  $i,j \in l_n$ share an edge if and only if $\vert i - j \vert = 1$; and $l_{\infty}$ for the infinite line, i.e. the graph with vertex set $\mathbb{Z}$, and $i,j \in l_{\infty}$ share an edge if and only if $\vert i - j \vert = 1$.
 
 We denote $\mathbb{Z}_n,3\leq n<\infty$ for the cyclic graphs, i.e. the graph $l_n$ with an extra edge attached between $1$ and $n$.


\begin{figure}[h!]	
		\begin{tikzpicture}[baseline]
			\node at (-5,0) (1)	  	 {$1$};
			\node at (-4,0) (2)	  	 {$2$};
			\node at (-3,0) 	(3)  	 {$3$};
			\node at (-2,0) 	(4)  	 {$4$};
                \node at (-1,0) (5)	  	{$5$};

            			\node at (0.8,0.8) (6)	  	 {$5$};
			\node at (2,1.5) (7)	  	 {$1$};
			\node at (3.2,0.8) 	(8)  	 {$2$};
			\node at (1.3,-0.5) 	(9)  	 {$4$};
                \node at (2.7,-0.5) (10)	  	{$3$};

			\draw[-] (1) -- node {}(2);
                \draw[-] (2) -- node {} (3);
                \draw[-] (3) -- node {} (4);
                \draw[-] (4) -- node {} (5);

            \draw[-] (6) -- node {}(7);
                \draw[-] (7) -- node {} (8);
                \draw[-] (8) -- node {} (10);
                \draw[-] (10) -- node {} (9);
                \draw[-](9)-- node{}(6);

		\end{tikzpicture}

		\caption{The left is $l_5$, the right is $\mathbb{Z}_5$}
     \label{figure:graph-UPF-result}
\end{figure}
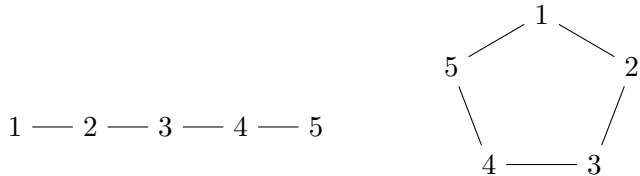

\subsection{Radius of graphs}
 
For a non-empty connected graph $\Gamma$, the  \textbf{radius of $\Gamma$} is defined as 
\[
    \Radius(\Gamma):=\inf_{t\in \Gamma}\sup_{s\in \Gamma}\Dist_{\Gamma}(t,s).
\] 
Here $\Dist_{\Gamma}(t,s)$ is the minimal length of a path in $\Gamma$ from $t$ to $s$. We set $\Radius(\emptyset) = 0$  and set $\Radius(\Gamma) = \infty$ when $\Gamma$ is not connected.

\subsection{Graph products}
 The graph product was first introduced as a basic construction for groups in Green's thesis \cite{greenGraphProductsGroups1990}. 
Given a simple graph $\Gamma$, and for every $v\in\Gamma$ given a group $G_v$, the graph product group $G_{\Gamma}=*_{v,\Gamma}G_v$ of $\{G_v\}_{v\in\Gamma}$  is defined as 
\[G_{\Gamma}:=*_{v\in\Gamma}G_v/ \langle sts^{-1}t^{-1}\mid s\in G_v,t\in G_w \text{ such that $v,w\in \Gamma$ 
 share an edge}\rangle.\]
 Here, $*_{v\in\Gamma}G_v$ is the free product of $\{G_v\}_{v\in\Gamma}$.

 When the graph $\Gamma$ is complete, $G_{\Gamma}=\prod_{v\in \Gamma}G_v$, that is, the graph product is the Cartesian product of groups. When the graph $\Gamma$ has no edges, $G_{\Gamma}= \ast_{v\in \Gamma}G_v$, that is, the graph product is the free product of groups. 
 
 Graph products of von Neumann algebras were first introduced by M\l{}otkowski \cite{mlotkowskiLfreeProbability2004}, as well as the  first author and Fima  \cite{caspersGraphProductsOperator2017}. For every $v\in\Gamma$, given a von Neumann algebra $M_v$ equipped with a faithful normal state $\varphi_v$, we can construct the canonical graph product $(M_{\Gamma}, \varphi_\Gamma)=*_{v,\Gamma}(M_v,\varphi_v)$. The construction of graph product of von Neumann algebras coincides with that of the graph product of groups, i.e. we have $\mathcal{L}(G_{\Gamma})=*_{v,\Gamma}\mathcal{L}(G_v)$. 
 The graph product of hyperfinite II$_1$-factor $R_{\Gamma}$ can therefore be defined as the group von Neumann  algebra $\mathcal{L}(G_{\Gamma})$ of the graph product over a graph $\Gamma$ of any countable amenable icc groups $G_v, v \in \Gamma$. We can set all these amenable icc group as $S_{\infty}$, the group of finite permutations of the natural numbers,   and we have $R_{\Gamma}=\mathcal{L}(*_{v,\Gamma}S_{\infty})$. We note that this definition  uses Connes' celebrated result that injectivity implies hyperfiniteness for separable II$_1$-factors \cite{ConnesClassificationInjectiveFactors}. 
 
 Similar to the graph product of groups, when the graph $\Gamma$ is complete, $M_{\Gamma}=\overline{\bigotimes}_{v\in\Gamma}M_v$; and when the graph $\Gamma$ has no edge, $(M_{\Gamma}, \varphi_\Gamma) =*_{v\in\Gamma}(M_v,\varphi_v)$ is the free product.  There are embeddings from every vertex von Neumann algebra $M_v$ to the graph product von Neumann algebra $M_{\Gamma}$ $\lambda_v:M_v\to M_{\Gamma}$.  
For $a\in \lambda_v(M_v)$ and $b\in \lambda_w(M_w)$, when $(v,w)\in E\Gamma$,  $a$ and $b$ commute; when $(v,w)\notin E\Gamma$, $a$ and $b$ are freely independent with respect to $\varphi_\Gamma$.  We refer to \cite{caspersGraphProductsOperator2017} for more knowledge about the concrete construction of graph product von Neumann algebras. We shall use several times that graph products of II$_1$-factors are again II$_1$-factors \cite{caspersGraphProductsOperator2017}. In particular  $R_\Gamma$ as defined above is a II$_1$-factor.  Further, if $M_v$ is a II$_1$-factor then for the relative commutant in $M_\Gamma$ we have
\[
M_v' \cap M_\Gamma = M_{\Link(v)}. 
\]

 The following proposition is about embeddings of von Neumann subalgebras inside graph product von Neumann algebras. This result was first proved in \cite[Proposition 5.9]{BCC}. 
 
\begin{proposition}\label{Lem=Selfembedding}
Let $\Gamma$ be a simple graph, and for $v\in \Gamma$ let $(M_{v},\tau_v)$  be a tracial von Neumann algebra. Fix $v\in \Gamma$ and let $N\subseteq M_v$ be diffuse. If $N \prec_{M_{\Gamma}} M_\Lambda$ for some subgraph $\Lambda \subseteq \Gamma$, then $v \in \Lambda$. In particular if $\Lambda = \{w\}$, a singleton set, then $v = w$.

\end{proposition}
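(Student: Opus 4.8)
The plan is to argue by contraposition and analyze how the embedding $N \prec_{M_\Gamma} M_\Lambda$ interacts with the free/tensor structure of the graph product. Suppose $v \notin \Lambda$. The key geometric decomposition is to separate the vertices of $\Lambda$ into those that are linked to $v$ and those that are not: write $\Lambda_0 = \Lambda \cap \Link(v)$ and $\Lambda_1 = \Lambda \setminus \Link(v)$. Then $\Lambda_1$ is a subgraph all of whose vertices are non-adjacent to $v$, while $M_{\Lambda_0} \subseteq M_v' \cap M_\Gamma = M_{\Link(v)}$. Since $N \subseteq M_v$ is diffuse and $M_v$ commutes with $M_{\Lambda_0}$, the point is that an embedding of $N$ into $M_\Lambda = M_{\Lambda_0} \barotimes M_{\Lambda_1}$ should force the "interesting part" of the image into the $M_{\Lambda_1}$-factor, but $M_v$ is freely independent from $M_{\Lambda_1}$ (every vertex of $\Lambda_1$ fails to be adjacent to $v$), and a diffuse subalgebra of one free factor cannot embed into another.

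More precisely, I would proceed as follows. First, recall the standard fact from Popa's intertwining theory that $N \prec_{M_\Gamma} M_\Lambda$ holds if and only if there is no sequence of unitaries $u_k \in N$ with $\|E_{M_\Lambda}(a u_k b)\|_2 \to 0$ for all $a,b \in M_\Gamma$. Next, I would use the structure of the conditional expectation $E_{M_\Lambda}$ on the graph product in terms of reduced words: a reduced word in $M_\Gamma$ survives under $E_{M_\Lambda}$ only if it can be rewritten, using the commutation relations, as a word whose letters all lie in $M_\Lambda$. Since $v \notin \Lambda$, any letter from $M_v \ominus \CC$ that appears in a reduced word cannot be absorbed into $M_\Lambda$; it can only be "moved past" letters coming from $M_{\Link(v)}$. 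The heart of the argument is therefore a combinatorial/moment estimate showing that for a sequence of Haar-type unitaries $u_k$ in the diffuse algebra $N \subseteq M_v$ (which exist by diffuseness), one has $\|E_{M_\Lambda}(a u_k b)\|_2 \to 0$ for $a, b$ running over a dense set of reduced words, contradicting $N \prec_{M_\Gamma} M_\Lambda$.

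An alternative, cleaner route avoids the word-combinatorics: decompose $M_\Gamma = M_{\Star(v)} *_{M_{\Link(v)}} M_{\Gamma \setminus \{v\}}$ as an amalgamated free product over $M_{\Link(v)}$ (this is a standard feature of graph products), and note $M_\Lambda \subseteq M_{\Gamma \setminus \{v\}}$ since $v \notin \Lambda$. Then I would invoke the malnormality-type property of amalgamated free products: a diffuse subalgebra $N$ of $M_{\Star(v)}$ that is not intertwinable into $M_{\Link(v)}$ inside $M_{\Star(v)}$ cannot be intertwined into $M_{\Gamma\setminus\{v\}}$ inside $M_\Gamma$ unless it already intertwines into $M_{\Link(v)}$. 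But $N \subseteq M_v$ is diffuse and $M_v$ is freely independent from $M_{\Link(v)}$ inside $M_{\Star(v)} = M_v \barotimes M_{\Link(v)}$; more to the point, $N \subseteq M_v$ cannot embed into $M_{\Link(v)}$ inside $M_{\Star(v)} = M_v \barotimes M_{\Link(v)}$ because a diffuse subalgebra of the first tensor leg never intertwines into the second leg. Chaining these via \cref{Lem=StableEmbedding}\eqref{Item=StableEmbedding:transative} gives the contradiction. For the final "in particular" statement, once $v \in \Lambda = \{w\}$ we immediately get $v = w$.

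The main obstacle I anticipate is making the amalgamated-free-product malnormality step precise in the generality needed here: one must know that $M_{\Link(v)}$ sits in $M_{\Gamma \setminus \{v\}}$ in a way that makes the relevant relative intertwining statement available (this is where a result on amalgamated free products — e.g. an Ioana–Peterson–Popa or Vaes-style dichotomy — is used), and one must handle the case where $\Link(v)$ is infinite so that $M_{\Link(v)}$ is genuinely infinite-dimensional. If one instead takes the direct word-combinatorial route, the obstacle shifts to organizing the estimate on $\|E_{M_\Lambda}(au_kb)\|_2$ cleanly; but since $N$ lives inside a single vertex algebra $M_v$, reduced words involving $u_k$ are particularly simple, and this estimate is expected to be routine once set up correctly. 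Either way, the conceptual content is that a diffuse algebra inside $M_v$ "remembers" the vertex $v$.
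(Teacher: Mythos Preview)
The paper does not prove this proposition; it is quoted from \cite[Proposition 5.9]{BCC} without argument, so there is no in-paper proof to compare against. Your proposal is therefore being assessed on its own merits.

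Both routes you sketch are standard and correct in outline. The amalgamated-free-product route is the cleaner of the two and is essentially how such statements are usually proved: the decomposition $M_\Gamma = M_{\Star(v)} *_{M_{\Link(v)}} M_{\Gamma\setminus\{v\}}$ is a basic structural feature of graph products, and the Ioana--Peterson--Popa type fact you need is that for $M = M_1 *_B M_2$ and $A\subseteq M_1$, if $A\prec_M M_2$ then $A\prec_{M_1} B$. Combined with the elementary tensor-product observation that a diffuse $N\subseteq M_v$ cannot intertwine into the second leg of $M_v\barotimes M_{\Link(v)}$, this gives the contradiction. One small slip: you write that ``$M_v$ is freely independent from $M_{\Link(v)}$ inside $M_{\Star(v)}$'', but of course they commute (tensor independence); you correct this immediately in the next clause, so it is only a wording issue. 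The infiniteness of $\Link(v)$ is not a genuine obstacle for either step.

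The word-combinatorial route also works and is closer in spirit to how graph-product papers often argue directly: since $N\subseteq M_v$, the element $au_kb$ for reduced words $a,b$ has a controlled reduced-word expansion, and the letter from $M_v\ominus\CC$ cannot be eliminated when $v\notin\Lambda$, forcing $E_{M_\Lambda}(au_kb)$ to vanish termwise once $u_k$ is chosen with $\tau(u_k)\to 0$ (and more generally with moments going to zero). Either argument would be acceptable here.
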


\section{The Peterson-Thom conjecture and quasi-strong solidity of $R_{\Gamma}$ }
The Peterson-Thom conjecture, motived by Peterson and Thom their results on $L^2$-Betti numbers and further result from Popa's deformation/rigidity theory,  was first conjectured in \cite{PetersonThom}. This conjecture states that if $Q$ is a diffuse amenable von Neumann subalgebra of $\mathcal{L}(\mathbb{F}_n)$ $(n>1)$, then there is a unique maximal amenable von Neumann subalgebra $P\subseteq\mathcal{L}(\mathbb{F}_n)$ such that $Q\subseteq P$. Then, Hayes \cite{hayesRandomMatrixApproach2022} showed that the Peterson-Thom conjecture can be implied by the validity of a random matrix conjecture. Recently, this random matrix conjecture and therefore, the Peterson-Thom conjecture was solved independently by Belinschi-Capitaine \cite{BelinschiCapitaine22} and Bordenave-Collins\cite{BordenaveCollins23}. As an important and very natural application of the validity of the Peterson-Thom conjecture, in \cite{HayesJekelElayavalli}, Hayes, Jekel and Kunnawalkam Elayavalli showed that non-trival (interpolated) free group factors are quasi-strongly solid.  We state the next theorem for general interpolated free group factors $\mathcal{L}(\mathbb{F}_t), t\in(1,\infty)$ of which we omit the definition.  We shall only apply it in case of free group factors, i.e. when $t = n \in \mathbb{N}_{\geq 2}$. 

\begin{theorem}[Theorem 1.3 of \cite{HayesJekelElayavalli}]\label{thm3.1}
    Let $t\in(1,\infty)$ and $A\subseteq \mathcal{L}(\mathbb{F}_t)$ be a diffuse, amenable von Neumann subalgebra. Then for any subset $X\subseteq \qNor^1_M(A)$, $X''$ is amenable. In particular, $\mathcal{L}(\mathbb{F}_t)$ is quasi-strongly solid.
\end{theorem}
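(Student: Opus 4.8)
The plan is to run the $1$-bounded entropy argument of Hayes and of Hayes, Jekel and Kunnawalkam Elayavalli \cite{HayesJekelElayavalli}. Write $M = \mathcal{L}(\mathbb{F}_t)$ and fix a diffuse amenable $A \subseteq M$. Let $h(\cdot : M)$ be Hayes' $1$-bounded entropy (built from Voiculescu's microstate free entropy, in the form isolated by Jung); it assigns to every von Neumann subalgebra $N \subseteq M$ a value $h(N : M) \in [-\infty, +\infty]$, is monotone in the sense that $h(P : M) \le h(Q : M)$ whenever $P \subseteq Q$, and can only decrease when the ambient algebra is enlarged. Since $\mathbb{F}_t$ yields a Connes-embeddable $M$, all of this is honestly defined. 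The target is the chain
\[
h\bigl( W^*(\qNor^1_M(A)) : M \bigr) \;\le\; h(A : M) \;\le\; 0 ,
\]
whose right-hand inequality is the standard vanishing of $1$-bounded entropy on amenable algebras: $A$ is diffuse and hyperfinite, so $h(A : M) \le h(A : A) = 0$. Once the chain is in place, the resolved Peterson--Thom conjecture upgrades ``non-positive $1$-bounded entropy'' to ``amenable'' and finishes the proof.

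The content is the left-hand inequality. This is Hayes' theorem that $1$-bounded entropy does not increase under passing to the normalizer, $h(\Nor_M(A)'' : M) = h(A : M)$ for $A$ diffuse, together with its extension to one-sided quasi-normalizers: the von Neumann algebra $W^*(\qNor^1_M(A))$ generated by $\qNor^1_M(A)$ --- which, being a von Neumann algebra, contains the adjoints of all one-sided quasi-normalizing elements --- still satisfies $h\bigl(W^*(\qNor^1_M(A)) : M\bigr) \le h(A : M)$. The mechanism is that an element $x$ with $A x = \sum_{i=1}^n x_i A$ is ``finite over $A$ on one side,'' which is exactly the condition under which Hayes' microstate counting shows that microstates for $A$ extend, with dimension-controlled ambiguity, to microstates for $W^*(A, x)$; one then propagates this statement along the algebra generated by $\qNor^1_M(A)$ and passes to the limit. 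I expect the main obstacle to be precisely this extension, because $\qNor^1_M(A)$ is one-sided and not $\ast$-closed: one must check that adjoints of quasi-normalizing elements, and hence the full generated von Neumann algebra --- which is what an arbitrary subset $X$ in the statement forces us to control --- are still governed by the same bound, via the interplay of $\qNor^1_M(A)$, $\qNor_M(A) = \qNor^1_M(A) \cap \qNor^1_M(A)^\ast$, and the underlying bimodule estimates. If one is willing to quote Hayes' ``weak quasi-normalizer'' form of the inequality, this step is a citation; either way, this is where the von Neumann--algebraic work sits.

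Finally, set $N = W^*(\qNor^1_M(A))$. It contains the diffuse algebra $A$, and by the chain $h(N : M) \le 0$. Now invoke the resolved Peterson--Thom conjecture in the form proved by Hayes \cite{hayesRandomMatrixApproach2022} modulo a random matrix conjecture, now unconditional by \cite{BelinschiCapitaine22, BordenaveCollins23}: every diffuse von Neumann subalgebra of $\mathcal{L}(\mathbb{F}_t)$ with non-positive $1$-bounded entropy is amenable. Hence $N$ is amenable (reducing to the diffuse case by splitting off an atomic, hence type I and amenable, summand if necessary). Since any $X \subseteq \qNor^1_M(A)$ gives $X'' \subseteq N$, and subalgebras of amenable algebras are amenable, $X''$ is amenable. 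Quasi-strong solidity is the special case via the inclusions $\Nor_M(A) \subseteq \qNor_M(A) \subseteq \qNor^1_M(A)$, which give that $\qNor_M(A)''$, a fortiori $\Nor_M(A)''$, is amenable. The passage from integer $n$ to arbitrary $t \in (1,\infty)$ is handled by the amplification calculus for interpolated free group factors \cite{Radulescu94} together with the behaviour of $h$ and of this property under compressions and amplifications.
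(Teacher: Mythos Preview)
The paper does not give its own proof of this statement: Theorem~\ref{thm3.1} is quoted verbatim as \cite[Theorem~1.3]{HayesJekelElayavalli} and used as a black box. There is therefore no in-paper argument to compare against.

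That said, your sketch faithfully reproduces the strategy of the cited source. The two pillars are exactly the ones you name: Hayes' $1$-bounded entropy inequality for (one-sided) quasi-normalizers, giving $h\bigl(W^*(\qNor^1_M(A)):M\bigr)\le h(A:M)\le 0$, followed by the now-unconditional implication ``diffuse subalgebra of $\mathcal{L}(\mathbb{F}_t)$ with $h\le 0$ $\Rightarrow$ amenable'' coming from \cite{hayesRandomMatrixApproach2022} together with the strong convergence results of \cite{BelinschiCapitaine22,BordenaveCollins23}. Your caveats about the one-sidedness of $\qNor^1_M(A)$ are well placed: this is precisely where \cite{HayesJekelElayavalli} does the work, via the weak-quasi-normalizer/wq-normalizer formulation of Hayes' entropy inequality, and is not a mere citation of the normalizer case. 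The amplification remark at the end is also the right way to pass from integer $n$ to general $t$.
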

\begin{lemma}\label{Lem=FreeR}
    For $n\geq 2$ we have $R^{*n}\simeq\mathcal{L}(\mathbb{F}_n)$.
\end{lemma}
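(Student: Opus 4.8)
The plan is to deduce this from the structure theory of free products of hyperfinite von Neumann algebras developed by Voiculescu, R{\u a}dulescu and Dykema. The only features of $R$ that are actually used are that it is diffuse, hyperfinite, and carries a faithful normal trace; in Dykema's terminology, that a diffuse hyperfinite tracial von Neumann algebra has free dimension $1$. Before invoking anything one should note that $R$ does satisfy these hypotheses: being a II$_1$-factor it is diffuse, and it is hyperfinite with its unique trace.

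I would first isolate the external inputs. (i) For a diffuse hyperfinite tracial von Neumann algebra $A$ one has $A * \mathcal{L}(\mathbb{Z}) \simeq \mathcal{L}(\mathbb{F}_2)$ (Dykema); equivalently $A$ has free dimension $1$. (ii) For the interpolated free group factors, $\mathcal{L}(\mathbb{F}_s) * \mathcal{L}(\mathbb{F}_t) \simeq \mathcal{L}(\mathbb{F}_{s+t})$, in particular for integer parameters (Voiculescu, R{\u a}dulescu, Dykema). (iii) $\mathcal{L}(\mathbb{F}_k) \simeq \mathcal{L}(\mathbb{F}_{k-1}) * \mathcal{L}(\mathbb{Z})$ for every integer $k \geq 2$, with the convention $\mathcal{L}(\mathbb{F}_1) := \mathcal{L}(\mathbb{Z})$ (Voiculescu).

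Granting (i)--(iii), the statement follows by induction on $n$. The base case $n = 2$ is precisely Dykema's theorem that the free product of two diffuse hyperfinite von Neumann algebras is $\mathcal{L}(\mathbb{F}_2)$. For the inductive step, let $n \geq 3$ and assume $R^{*(n-1)} \simeq \mathcal{L}(\mathbb{F}_{n-1})$. Using associativity of the free product together with (iii), then (i), then (ii),
\[
R^{*n} \simeq \mathcal{L}(\mathbb{F}_{n-1}) * R \simeq \mathcal{L}(\mathbb{F}_{n-2}) * \big( \mathcal{L}(\mathbb{Z}) * R \big) \simeq \mathcal{L}(\mathbb{F}_{n-2}) * \mathcal{L}(\mathbb{F}_2) \simeq \mathcal{L}(\mathbb{F}_n).
\]
Alternatively one may argue in one shot: $R^{*n}$ is a free product of $n$ von Neumann algebras each of free dimension $1$, so by Dykema it is an interpolated free group factor $\mathcal{L}(\mathbb{F}_t)$ with $t = n$, which for integer $n$ is the ordinary free group factor; the restriction $n \geq 2$ is what guarantees $t > 1$ and hence that we land in a genuine (interpolated) free group factor.

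There is no genuine obstacle here; the entire content rests on the cited results of Voiculescu, R{\u a}dulescu and Dykema on (interpolated) free group factors. The only points that deserve a sentence of care are the verification that $R$ meets Dykema's hypotheses and the bookkeeping that the free dimension (equivalently, the free-group index) comes out to exactly $n$ rather than some other value. I would not attempt a self-contained proof: reproving Dykema's free-probabilistic computation of $R * \mathcal{L}(\mathbb{Z})$, while classical, is lengthy, and quoting it is the appropriate move.
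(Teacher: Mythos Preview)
Your proposal is correct and follows essentially the same route as the paper: an induction on $n$ resting entirely on Dykema's results about free products of hyperfinite von Neumann algebras. The only cosmetic difference is that the paper quotes the inductive step $\mathcal{L}(\mathbb{F}_{n-1}) * R \simeq \mathcal{L}(\mathbb{F}_n)$ as a single citable result from Dykema rather than unpacking it via your (iii), (i), (ii), and obtains the base case from $R * R \simeq R * \mathcal{L}(\mathbb{Z})$ combined with that same result at $n=2$.
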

\begin{proof}
    From Theorem 4.1 of \cite{DykemaJFA1992}, we have $\mathcal{L}(\mathbb{F}_{n-1})*R\simeq\mathcal{L}(\mathbb{F}_{n}), n \geq 2$. And from Corollary 3.6 of \cite{DykemaPJM1994}, we have $R*R\simeq R*\mathcal{L}(\mathbb{Z})$. Combining these two formulas, we obtain the formula in this lemma.
\end{proof}
The following theorem will characterize when $R_{\Gamma}$ is quasi-strongly solid.
\begin{theorem}\label{thm3.3}
    Let $\Gamma$ be a finite simple graph. Then $R_{\Gamma}$ is quasi-strongly solid if only if every connected component of $\Gamma$ is complete.
\end{theorem}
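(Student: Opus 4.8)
The plan is to prove both directions separately. For the easy direction, suppose every connected component of $\Gamma$ is complete. Then $R_\Gamma$ is a finite tensor product $\overline{\bigotimes}_{i} R$ (one factor per connected component, each a tensor product of copies of $R$), hence $R_\Gamma \simeq R$. Since $R$ is hyperfinite it is amenable, and a standard fact is that amenable von Neumann algebras are quasi-strongly solid: for any von Neumann subalgebra $A$ we have $\qNor_{R}(A)'' \subseteq R$, which is amenable. So this direction is immediate.

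For the hard direction, I argue by contrapositive: assume some connected component $\Lambda$ of $\Gamma$ is not complete, and produce a diffuse amenable subalgebra $A \subseteq R_\Gamma$ with $\qNor_{R_\Gamma}(A)''$ non-amenable. Since $\Lambda$ is connected and not complete, it contains two vertices $v, w$ at distance exactly $2$, joined by a path $v - u - w$ with $(v,w) \notin E\Gamma$. I would first reduce to the subgraph: because $R_\Lambda \subseteq R_\Gamma$ is the range of a trace-preserving conditional expectation (a general feature of graph products over induced subgraphs), quasi-strong solidity of $R_\Gamma$ passes to $R_\Lambda$, and indeed it suffices to handle the induced path $\Lambda_0 = \{v, u, w\}$, i.e.\ $R_{\Lambda_0} \simeq R_{l_3}$. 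So the crux is: $R_{l_3}$ is not quasi-strongly solid. Here $R_{l_3} = (R_v \overline{\otimes} R_u) \ast_{R_u} (R_u \overline{\otimes} R_w)$ is an amalgamated free product over the amenable algebra $R_u$; equivalently, writing $M_u = R_u$, one has $R_{l_3} = M_u \overline{\otimes}(R_v \ast R_w)$? — no, that is false, the amalgamation is the point. Instead I take $A = R_u \simeq R$, a diffuse amenable subalgebra. Its relative commutant is $R_u' \cap R_{l_3} = R_{\Link(u)} = R_v \overline{\otimes} R_w$, and in particular the normalizer of $A$ generates at least $R_u \overline{\otimes} R_v \overline{\otimes} R_w \supseteq R_v \ast$\,? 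Let me instead use the quasi-normalizer directly: $\qNor_{R_{l_3}}(R_u) \supseteq R_u \vee (R_u' \cap R_{l_3})$, and I claim it in fact generates all of $R_{l_3}$, because any element of $R_v$ or $R_w$ quasi-normalizes $R_u$ (as $R_u$ commutes with each of $R_v$, $R_w$). Since $R_v$ and $R_w$ are free inside $R_{l_3}$ (they share no edge), $R_v \vee R_w \simeq R \ast R \simeq \mathcal{L}(\mathbb{F}_2)$ by \cref{Lem=FreeR}, which is not amenable. Hence $\qNor_{R_{l_3}}(R_u)''$ is non-amenable, so $R_{l_3}$ — and therefore $R_\Gamma$ — is not quasi-strongly solid.

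The main obstacle is the reduction step: I must justify that non-quasi-strong-solidity of an induced subgraph algebra $R_\Lambda$ implies the same for $R_\Gamma$. The issue is that quasi-normalizers do not automatically behave well under passing to a subalgebra that is the range of a conditional expectation — a subalgebra $A \subseteq R_\Lambda$ could acquire many more quasi-normalizers inside the bigger algebra $R_\Gamma$, but that only helps, since it makes $\qNor_{R_\Gamma}(A)''$ potentially larger, hence still non-amenable as soon as it contains the non-amenable $\qNor_{R_\Lambda}(A)''$. So in fact the inclusion $\qNor_{R_\Lambda}(A) \subseteq \qNor_{R_\Gamma}(A)$ is the only thing needed, and it is immediate from the definitions. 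The one genuine point requiring care is verifying $\qNor_{R_{l_3}}(R_u) \supseteq R_v$: if $a \in R_v$ then $R_u a = a R_u$ since $R_u, R_v$ commute, so $a$ lies in the (two-sided) quasi-normalizer with the single element $x_1 = a$. With that, the argument above closes, and no random-matrix input or Peterson--Thom machinery is needed for this theorem — it is purely a structural/amalgamation statement about hyperfinite graph products.
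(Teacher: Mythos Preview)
Your ``easy'' direction contains a genuine error that inverts the whole logic of the theorem. If every connected component of $\Gamma$ is complete, then within each component the vertex algebras commute and give a tensor product $\simeq R$; but vertices in \emph{different} components share no edge, so the component algebras are in \emph{free} position, not tensor position. Hence $R_\Gamma \simeq R^{\ast n}$ (free product), where $n$ is the number of components, and by \cref{Lem=FreeR} this is $\mathcal{L}(\mathbb{F}_n)$ when $n\geq 2$, not $R$. The conclusion that $R_\Gamma$ is quasi-strongly solid then rests precisely on \cref{thm3.1}, i.e.\ on the resolution of the Peterson--Thom conjecture via Hayes--Jekel--Kunnawalkam Elayavalli. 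So your final remark that ``no random-matrix input or Peterson--Thom machinery is needed'' is exactly backwards: this is the step where that machinery is indispensable, and the paper's proof invokes it explicitly.

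Your contrapositive direction is essentially correct and matches the paper's argument (the paper uses the normalizer rather than the quasi-normalizer, but the point is the same). One small aside: you write ``$R_{l_3} = M_u \overline{\otimes}(R_v \ast R_w)$? --- no, that is false, the amalgamation is the point.'' In fact that tensor splitting \emph{is} correct: since $R_u$ commutes with both $R_v$ and $R_w$, one has $R_{l_3}\simeq R_u\,\overline{\otimes}\,(R_v\ast R_w)\simeq R\,\overline{\otimes}\,\mathcal{L}(\mathbb{F}_2)$, which is exactly the formula the paper uses. This does not affect the validity of your argument for that direction, but it would let you shorten it.
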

\begin{proof}
    If  every connected components of $\Gamma$ is complete, then $R_{\Gamma}=R^{*n}$, where $n$ is the number of connected components of $\Gamma$; this follows   the fact that $R \overline{\otimes} R \simeq R$. When $n=1$, $R_{\Gamma}=R$, which itself is amenable,  therefore also quasi-strongly solid. When $n\geq 2$, by \cref{Lem=FreeR}, $R_{\Gamma}\simeq \mathcal{L}(\mathbb{F}_n)$, which is also quasi-strongly solid by \cref{thm3.1}.

    If $\Gamma$ has a incomplete component, then it must contain a $l_3$ as subgraph. But $R_{l_3}=R\barotimes \mathcal{L} \mathbb{F}_2$ is not strongly solid since $\mathcal{L}( \mathbb{F}_2)\subseteq \Nor_{R_{l_3}}(R)''$ where $\mathcal{L}( \mathbb{F}_2)$ is not amenable.
\end{proof}

\section{Main theorem}

This section contains our new results. We find a class of graph products of hyperfinite II$_1$-factors that remembers the graph.

\subsection{H-rigid graphs}
\begin{definition}
Let $\Gamma$ be a connected simple graph. A subset $\Gamma_0 \subseteq \Gamma$ is called an {\bf internal set} if $\Gamma_0\not =\emptyset$ and  $\Link(\Gamma_0)$ is not a complete graph or equivalently $\Link(\Gamma_0)$ contains at least 2 points that do not share an edge. We view $\Gamma_0$ as a subgraph of $\Gamma$ by declaring that two vertices in $\Gamma_0$ share an edge if and only if they share an edge in $\Gamma$. When an internal set is just a single vertex, we call it an {\bf internal vertex}.  Let $\Int(\Gamma)$ be the set of internal sets of $\Gamma$. When all internal sets of $\Gamma$ are internal vertices, we use $\Int(\Gamma)$ to denote the subgraph of $\Gamma$ whose vertices are internal vertices of $\Gamma$. In this case, we also call $\Int(\Gamma)$ the \textbf{internal graph} of $\Gamma$, and call the vertices in $\Gamma\setminus \Int(\Gamma)$  the \textbf{external vertices} of $\Gamma$.
\end{definition}

\begin{example}
Consider Figure \ref{figure:graph-UPF-result}. In $l_5$ the internal vertices are $2,3$ and $4$. In $\mathbb{Z}_5$ every vertex is internal. 
\end{example}

\begin{remark}
A non-empty set $\Gamma_0 \subseteq \Gamma$ is internal if and only if $R_{\Link(\Gamma_0)}$ is non-amenable. 
\end{remark}

\begin{proposition}\label{prop4.2}  
    Let $\Gamma$ be a non-empty connected graph with $\Int(\Gamma)\neq \emptyset$ whose internal sets are all internal vertices. Then every external vertex of $\Gamma$ must shares an edge with some internal vertex of $\Gamma$. 
\end{proposition}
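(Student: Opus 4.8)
The plan is to argue by contradiction. Suppose $v$ is an external vertex of $\Gamma$ that shares no edge with any internal vertex; that is, $\Link(v) \subseteq \Gamma \setminus \Int(\Gamma)$ consists entirely of external vertices. The goal is to show that $v$ is actually internal, contradicting externality, or else to produce some internal set that is not an internal vertex, contradicting the hypothesis on $\Gamma$.

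First I would unwind the definitions. Since $v$ is external, $\Link(v)$ is a complete graph (otherwise $\{v\}$ would be an internal vertex). Similarly every $w \in \Link(v)$ is external, so $\Link(w)$ is complete as well. The key point I want to extract is that for each $w \in \Link(v)$, the completeness of $\Link(w)$ together with $v \in \Link(w)$ forces $\Link(v) \cup \{v\} = \Star(v) \subseteq \Star(w)$, and in fact $\Star(v)$ is a complete graph. Next I would use connectedness of $\Gamma$: since $\Gamma$ is connected and $\Int(\Gamma) \neq \emptyset$, if $\Star(v)$ were all of $\Gamma$ then $\Gamma$ would be complete, hence $\Int(\Gamma) = \emptyset$, a contradiction. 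So there is a vertex outside $\Star(v)$, and by connectedness there is one adjacent to some vertex of $\Star(v)$; I would pick a shortest path from $v$ leaving $\Star(v)$ to locate a vertex $u \notin \Star(v)$ adjacent to some $w \in \Link(v)$.

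Now consider the set $\Gamma_0 = \Star(v)$, or perhaps more cleverly $\Gamma_0 = \Link(v) \cup \{v\}$ viewed as a candidate internal set, and compute $\Link(\Gamma_0) = \bigcap_{x \in \Gamma_0} \Link(x)$. The idea is that because $\Link(w)$ is complete for every $w$ in this set and these stars overlap heavily, $\Link(\Gamma_0)$ should still be rich enough to be non-complete — the vertex $u$ found above, together with $v$ itself, or $u$ together with another well-chosen vertex, should witness two non-adjacent points in some relevant link. If $\Link(\Gamma_0)$ is non-complete then $\Gamma_0$ is an internal set; since $\Gamma_0$ has more than one vertex (it contains $v$ and all of $\Link(v)$, and $\Link(v) \neq \emptyset$ because $v$ is not isolated in a connected graph with $\geq 2$ vertices), $\Gamma_0$ is an internal set that is not an internal vertex, contradicting the hypothesis that all internal sets of $\Gamma$ are internal vertices.

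The main obstacle I anticipate is the bookkeeping in the previous paragraph: correctly identifying which set to feed into $\Link(\cdot)$ and then exhibiting the two non-adjacent vertices in its link, handling the edge cases where $\Link(v)$ is a single vertex or where $\Gamma$ is small. One has to be careful that the witnessing pair of non-adjacent vertices genuinely lies in $\Link(\Gamma_0)$, i.e. is adjacent to \emph{every} vertex of $\Gamma_0$, not just to $v$; completeness of each $\Link(w)$ for $w \in \Star(v)$ is exactly the tool that should make this work, since it lets adjacency to one vertex of $\Star(v)$ propagate to adjacency to the rest. Once the right $\Gamma_0$ is chosen, the rest is a short definitional check.
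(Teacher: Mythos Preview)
Your contradiction setup and your key observation---that completeness of $\Link(w)$ for each $w \in \Link(v)$ lets adjacency ``propagate'' through $\Star(v)$---are both right, but the propagation does not land where you expect. You plan to locate $u \notin \Star(v)$ adjacent to some $w \in \Link(v)$ and then push $u$ into $\Link(\Gamma_0)$. Run your own propagation one more step: since $u,v \in \Link(w)$ and $\Link(w)$ is complete, $u$ is adjacent to $v$, so $u \in \Star(v)$ after all. Thus no such $u$ exists; $\Star(v)$ is already a connected component, and by connectedness $\Gamma = \Star(v)$ is complete, contradicting $\Int(\Gamma)\neq\emptyset$. So the contradiction arrives \emph{before} you ever construct a non-singleton internal set, and the hypothesis that all internal sets are vertices is never actually used. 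Your proposed mechanism---exhibiting two non-adjacent points in $\Link(\Gamma_0)$---cannot be carried out, because the very completeness you rely on collapses the candidate witnesses.

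The paper's argument bypasses all of this. It takes a shortest path $(v_0,v_1,\ldots,v_n)$ from the external vertex $v_0$ to $\Int(\Gamma)$ and notes that if $n\geq 2$ then $v_0$ and $v_2$ are non-adjacent (by minimality of $n$) neighbours of $v_1$, so $v_1$ is already an internal vertex, contradicting minimality; hence $n=1$. This is three lines and, like the corrected version of your argument, never invokes the hypothesis on internal sets beyond what is needed to make the statement well-defined.
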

\begin{proof}
    Let $v_0 \in \Gamma\setminus \Int(\Gamma)$ be an external vertex. As $\Gamma$ is connected we may let $V = (v_0, v_1, v_2, \ldots, v_n)$ be a shortest path from $v_0$ to $\Int(\Gamma)$; more precisely $V$ is a sequence with $n$ minimal such that $v_i$ and $v_{i+1}$ share an edge and $v_n \in \Int(\Gamma)$. By minimality of $n$ we have that $v_i$ and $v_j$ cannot share an edge if $\vert i - j \vert \geq 2$. Then, if $n \geq 2$ we have that $v_1$ is an internal vertex as $v_0$ and $v_2$ are neighbours that do not share an edge. This would mean that $v_1$ is an internal vertex contradicting minimality of $n$. So $n =1$.   
 

    
 \end{proof}

\begin{lemma}\label{lemma4.3}
    Let $\Gamma$ be a  connected graph whose  internal sets are all internal vertices. Then we have 
    \[\Radius(\Gamma)\leq\Radius( \Int(\Gamma))+1.\]
\end{lemma}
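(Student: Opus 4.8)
The claim is $\Radius(\Gamma) \le \Radius(\Int(\Gamma)) + 1$ for a connected graph $\Gamma$ whose internal sets are all internal vertices. The natural strategy is to pick a center of $\Int(\Gamma)$ and show it is almost a center of $\Gamma$. More precisely, let $c \in \Int(\Gamma)$ be a vertex realizing $\Radius(\Int(\Gamma))$, i.e. $\Dist_{\Int(\Gamma)}(c, w) \le \Radius(\Int(\Gamma))$ for all $w \in \Int(\Gamma)$. (If $\Int(\Gamma) = \emptyset$ the statement is vacuous or trivial depending on convention; and one should first observe $\Int(\Gamma)$ is connected, or at least handle the case where $\Radius(\Int(\Gamma)) = \infty$, in which case there is nothing to prove.) I would then bound $\Dist_\Gamma(c, v)$ for an arbitrary $v \in \Gamma$ by splitting into two cases: $v \in \Int(\Gamma)$ and $v$ external.

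For $v \in \Int(\Gamma)$, a path inside $\Int(\Gamma)$ from $c$ to $v$ is also a path in $\Gamma$, so $\Dist_\Gamma(c, v) \le \Dist_{\Int(\Gamma)}(c, v) \le \Radius(\Int(\Gamma))$; here I must make sure the graph-product convention for subgraphs (edges of $\Int(\Gamma)$ are exactly the edges of $\Gamma$ between internal vertices) guarantees that a path in $\Int(\Gamma)$ really is a path in $\Gamma$, which it does. For $v$ external, I invoke \cref{prop4.2}: $v$ shares an edge with some internal vertex $u \in \Int(\Gamma)$. Then $\Dist_\Gamma(c, v) \le \Dist_\Gamma(c, u) + 1 \le \Radius(\Int(\Gamma)) + 1$, using the previous case to bound $\Dist_\Gamma(c,u)$. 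Taking the supremum over all $v \in \Gamma$ gives $\sup_{v \in \Gamma} \Dist_\Gamma(c, v) \le \Radius(\Int(\Gamma)) + 1$, and since $\Radius(\Gamma) = \inf_t \sup_s \Dist_\Gamma(t,s) \le \sup_{v}\Dist_\Gamma(c,v)$, we are done.

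The one genuine subtlety — the "hard part," though it is mild — is ensuring that the distance computed inside $\Int(\Gamma)$ is meaningful, i.e. that $\Int(\Gamma)$ is itself connected (otherwise $\Radius(\Int(\Gamma)) = \infty$ and the inequality is trivially true, so in fact there is nothing to prove in the disconnected case; but if one wants the inequality to have content one should note when $\Int(\Gamma)$ is connected). So I would phrase the argument to first dispose of the case $\Radius(\Int(\Gamma)) = \infty$ trivially, and then in the remaining case work with a genuine center $c$ of $\Int(\Gamma)$ as above. Everything else is the routine triangle-inequality bookkeeping just described; the only external input is \cref{prop4.2}, which supplies the crucial fact that every external vertex is within distance one of $\Int(\Gamma)$.
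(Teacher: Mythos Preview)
Your argument is correct and follows essentially the same route as the paper: both proofs use \cref{prop4.2} to place every external vertex within distance $1$ of some internal vertex, and then bound the eccentricity of a (near-)center of $\Int(\Gamma)$ in $\Gamma$ via the triangle inequality. Your version is in fact slightly more careful than the paper's, since you make explicit the inequality $\Dist_\Gamma \le \Dist_{\Int(\Gamma)}$ on internal vertices and you separately dispose of the case $\Radius(\Int(\Gamma))=\infty$; the only cosmetic point to tidy is the $\Int(\Gamma)=\emptyset$ case, where (as in the paper) one should note that a connected $\Gamma$ with no internal vertices is complete, so $\Radius(\Gamma)\le 1 = \Radius(\emptyset)+1$.
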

\begin{proof}
If $\Gamma=\emptyset$, the inequality holds; If $\Gamma\neq\emptyset$ and $\Int(\Gamma)=\emptyset$, then $\Gamma$ is complete, and $\Radius(\Gamma)=1$. Hence,  the inequality holds again.

Now suppose that $\Gamma\neq \emptyset$ and $\Int(\Gamma)\neq \emptyset$.
    For $t,s\in \Gamma$, by \cref{prop4.2} we can find an internal vertex $v$ such that $\Dist_{\Gamma}(v,s)\leq 1$. Then we have \[\Dist_{\Gamma}(t,s)\leq \Dist_{\Gamma}(t,v)+\Dist_{\Gamma}(v,s)
    \leq \Dist_{\Gamma}(t,v)+1.\]
    But since $\Dist_{\Gamma}(t,v)\leq \sup_{w\in\Int(\Gamma)}\Dist(t,w)$, we have
    \[
    \sup_{s\in\Gamma}\Dist_{\Gamma}(t,s)\leq\sup_{w\in\Int(\Gamma)}\Dist_{\Gamma}(t,w)+1.\]
    Finally, we obtain that 
    \begin{align*}
\Radius(\Gamma)&=\inf_{t\in\Gamma}\sup_{s\in\Gamma}\Dist_{\Gamma}(t,s)\\&\leq\inf_{t\in\Int(\Gamma)}\sup_{w\in\Int(\Gamma)}\Dist(t,w)+1\\&=\Radius(\Int(\Gamma))+1.
    \end{align*}
    This concludes the proof.
\end{proof}

\begin{definition}[H-rigid graphs]
    We call a simple graph $\Gamma$  {\bf H-rigid}  if  (1) it is locally finite; (2) all of its internal sets are internal vertices; (3) for every non-empty finite subgraph $\Gamma_0\subseteq \Gamma$ with $\Link(\Gamma_0)\neq \emptyset$, we have that every connected component  of $\Gamma_0$ is  complete. 
\end{definition}

\begin{proposition}\label{prop4.5.}
    
The following three kinds of graphs are H-rigid:

    \begin{enumerate}
        \item Lines   $l_n$ with $2\leq n \leq \infty$;
        \item Cyclic graphs $\mathbb{Z}_n$ with $3\leq n <\infty$;
        \item Locally finite trees.
    \end{enumerate}
    \end{proposition}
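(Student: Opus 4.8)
The plan is to verify the three defining conditions of an H-rigid graph (local finiteness, all internal sets being internal vertices, and the ``complete component'' condition on finite subgraphs with nonempty link) for each of the three families separately, since the geometry is quite different in each case.

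First I would dispose of local finiteness: lines and cyclic graphs are obviously locally finite (each vertex has at most two neighbours), and locally finite trees are locally finite by hypothesis. Condition (3) is also comparatively easy. For a line or a tree, the graph is triangle-free, so the only complete subgraphs are single vertices and single edges; hence I need to check that whenever a finite subgraph $\Gamma_0$ has $\Link(\Gamma_0)\neq\emptyset$, each connected component of $\Gamma_0$ is a vertex or an edge. If a component contained a path of length two, say $a-b-c$ inside $\Gamma_0$, then a common neighbour $w\in\Link(\Gamma_0)$ would be adjacent to all of $a,b,c$, creating a cycle of length $3$ or $4$ — impossible in a tree, and in $l_n$ or $l_\infty$ one checks directly that no vertex is adjacent to three collinear vertices. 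For $\mathbb{Z}_n$ with $n\geq 4$ the graph is again triangle-free and the same argument applies; the case $\mathbb{Z}_3$ is the complete graph $K_3$, which has no internal sets and trivially satisfies all conditions (indeed $\Link(\Gamma_0)$ for $\Gamma_0$ the whole graph is empty).

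The substantive point is condition (2): every internal set is a single vertex. Recall an internal set is a nonempty $\Gamma_0$ with $\Link(\Gamma_0)$ not complete. I would argue contrapositively: if $\Gamma_0$ has at least two vertices, I must show $\Link(\Gamma_0)$ is complete (possibly empty). In a line $l_n$, if $\Gamma_0$ contains two vertices $i<j$, then $\Link(\Gamma_0)\subseteq \Link(i)\cap\Link(j)$, which is empty unless $j=i+2$, in which case it is the single vertex $\{i+1\}$ — a complete graph. In $\mathbb{Z}_n$, $n\geq 5$, the same local computation gives $\Link(\Gamma_0)$ empty or a singleton; for $\mathbb{Z}_4$ a pair of opposite vertices has link equal to the other two vertices, which are non-adjacent, but opposite vertices of $\mathbb{Z}_4$ are themselves non-adjacent — wait, that would make $\mathbb{Z}_4$ fail, so I should double-check: actually $\mathbb{Z}_4=K_{2,2}$, and $\{1,3\}$ has link $\{2,4\}$ which is not complete, so $\{1,3\}$ is an internal set that is not a vertex; hence I expect the statement to implicitly require $n\neq 4$ or the intended reading of $\mathbb{Z}_n$ excludes this, and I would flag this and otherwise restrict to $n=3$ or $n\geq5$. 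For a locally finite tree, if $\Gamma_0$ has two vertices $a\neq b$, any $w\in\Link(\Gamma_0)$ is adjacent to both, so $a-w-b$ is a path; two such common neighbours $w,w'$ would give a cycle $a-w-b-w'-a$, impossible, so $\Link(\Gamma_0)$ has at most one vertex and is complete.

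The main obstacle is therefore twofold: handling the genuinely exceptional small cyclic graph $\mathbb{Z}_4$ correctly (and more generally being careful that the ``link of a two-element set'' computations in $\mathbb{Z}_n$ are done for every small $n$), and, in the tree case, making sure the argument for condition (2) covers disconnected or large $\Gamma_0$ — but this is handled uniformly by the observation that any vertex in $\Link(\Gamma_0)$ is a common neighbour of every vertex of $\Gamma_0$, and two common neighbours of two distinct vertices of a tree produce a forbidden $4$-cycle. Once these case distinctions are pinned down, each verification is a short direct check, so I would organize the write-up as three short paragraphs, one per family, preceded by the remark that lines and trees are triangle-free so that complete subgraphs are edges or vertices.
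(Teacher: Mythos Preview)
Your approach is the same as the paper's: both directly enumerate the possible finite $\Gamma_0$ with $\Link(\Gamma_0)\neq\emptyset$, check that their connected components are complete, and then argue that non-singleton such $\Gamma_0$ have link of size at most one (hence are not internal).

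Your flag on $\mathbb{Z}_4$ is well taken and is in fact a gap in the paper, not in your argument. In $\mathbb{Z}_4$ the set $\{1,3\}$ has $\Link(\{1,3\})=\{2,4\}$, and $2,4$ are non-adjacent, so $\{1,3\}$ is a two-element internal set; thus $\mathbb{Z}_4$ is \emph{not} H-rigid. The paper's proof asserts that for $n\geq 4$ the ``two vertices with one vertex in between'' configuration is not internal, but this fails exactly at $n=4$ because there the link picks up the antipodal vertex as well. So the correct statement (and what you should prove) is $n=3$ or $n\geq 5$; this does not affect the downstream applications in the paper, since $R_{\mathbb{Z}_4}\simeq \mathcal{L}(\mathbb{F}_2)\barotimes\mathcal{L}(\mathbb{F}_2)$ falls into the bipartite phenomenon already discussed in the introduction.

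One small sharpening for your condition~(3) paragraph: in a triangle-free graph, any $\Gamma_0$ with $\Link(\Gamma_0)\neq\emptyset$ actually contains \emph{no} edges at all (a common neighbour of two adjacent vertices gives a triangle), so the components are single vertices; you do not need the longer path-of-length-two argument there. Your tree argument for condition~(2) via the $4$-cycle obstruction is exactly what the paper does, phrased slightly differently.
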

   \begin{proof}
        For lines $l_n$: When $n=2$, any non-empty subgraph of $l_2$ with non-empty link must be a singleton; when $3\leq n\leq\infty$, a subgraph with non-empty link must be a singleton or two vertices with one vertex in between, i.e. $(k,m) \in l_n \times l_n$ with   $\vert k - m \vert =2$. Such subgraphs clearly have the property that connected components are complete. Note that all vertex sets of two vertices with one vertex in between are not internal in $l_n, (3\leq n\leq \infty)$. Thus the internal sets of $l_n, (2\leq n\leq \infty)$ must be internal vertices.

        For cyclic graphs $\mathbb{Z}_n$: When $n=3$, any subgraph with non-empty link must be a $l_2$  or a singleton; when $4\leq n< \infty$, any subgraph with non-empty link must be a singleton or two vertices with one vertex in between. Again all such subgraphs have complete graphs as its connected components.  It is clear that $l_2$ is not an internal set in $\mathbb{Z}_3$ and  the vertex set of two vertices with one vertex between is not internal set in $\mathbb{Z}_n, (4\leq n < \infty)$. Therefore the internal sets in $\mathbb{Z}_n, (3\leq n < \infty)$ must be internal vertices.

        For locally finite trees: The subgraph with non-empty link must be several isolated vertices taken from a finite vertex set consisting of the parent of some node and all the offspring of that node. Again all such subgraphs have complete graphs as its connected components. Further, the vertex set of such subgraph will not be internal unless it is just a singleton since otherwise the link of such vertex set have only one vertex. Therefore the internal sets of a locally finite tree must be internal vertices.
    \end{proof}

   \begin{remark}
    H-rigid graphs may not be rigid (for the definition of rigid graphs see Definition 3.1 in \cite{BCC}). For example, $l_n, 3\leq n < \infty$ and finite trees are H-rigid but not rigid. In particular we obtain rigidity results for new types of graphs in this paper. 
\end{remark}

\subsection{Graph products of hyperfinite II$_1$-factors  over H-rigid graphs}

We set up the notation. Let $\Gamma$ and $\Lambda$ be simple graphs. For $v \in \Gamma, w \in \Lambda$ let $M_v = R, N_w = R$ where $R$ is the hyperfinite II$_1$-factor. For $\Gamma_0 \subseteq \Gamma, \Lambda_0  \subseteq \Lambda$, we set $M_{\Gamma_0} = \ast_{v, \Gamma_0} M_v$ and $N_{\Lambda_0} = \ast_{v, \Lambda_0} N_v$.  We sometimes simply write $M = M_\Gamma$ and $N=N_\Lambda$.

\begin{theorem}
\label{Prop=IntEmbedding}
Suppose that $\Gamma$ and $\Lambda$ are two simple graphs. Assume further that $\Lambda$ is  locally finite and  such that for every non-empty finite subgraph $\Lambda_0\subseteq \Lambda$ with $\Link(\Lambda_0)\neq \emptyset$, we have that every connected component of $\Lambda_0$ is complete. 
Suppose that $M := M_\Gamma \simeq N_\Lambda$ (notation before this theorem).  Then for every internal vertex $v \in \Int(\Gamma)$ there exists  an internal set $\Lambda_v \in \Int(\Lambda)$  such that $M_v \prec_M N_{\Lambda_v}$. 
\end{theorem}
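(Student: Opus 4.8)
The plan is to exploit the characterization of internal vertices via non-amenability of the link algebra, together with quasi-strong solidity. Fix an internal vertex $v \in \Int(\Gamma)$. Since $M_v = R$ is diffuse and amenable, and since the isomorphism $M_\Gamma \simeq N_\Lambda$ transports $M_v$ to a diffuse amenable subalgebra of $N_\Lambda$, the first step is to locate this copy of $R$ inside the graph product $N_\Lambda$. Here I would invoke an intertwining/embedding analysis for graph products: there should be a subgraph $\Lambda_v \subseteq \Lambda$, which we can take minimal, such that $M_v \prec_{N_\Lambda} N_{\Lambda_v}$ but $M_v \not\prec_{N_\Lambda} N_{\Lambda_v'}$ for any proper subgraph $\Lambda_v' \subsetneq \Lambda_v$. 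Such a minimal "support" subgraph exists because $\Lambda$ is locally finite (so any diffuse subalgebra is captured by some finite subgraph, cf.\ the kind of argument behind \cref{Lem=Selfembedding}), and one intersects down. The key structural input is that for this minimal $\Lambda_v$, the relative commutant $N_{\Lambda_v}' \cap N_\Lambda$ should be large — more precisely I expect $M_v' \cap N_\Lambda$ to be "controlled by" $N_{\Link(\Lambda_v)}$ up to intertwining.

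The second step is to show $\Lambda_v$ is an internal set, i.e.\ that $N_{\Link(\Lambda_v)}$ is non-amenable. Suppose not: then $\Link(\Lambda_v)$ is a complete graph, so $N_{\Link(\Lambda_v)}$ is a tensor product of copies of $R$, hence $N_{\Link(\Lambda_v)} \simeq R$ is amenable. Combined with the hypothesis (condition (3) of H-rigidity applied to $\Lambda$), the finite subgraphs meeting $\Lambda_v$ have complete connected components, so one can argue that $N_{\Star(\Lambda_v)}$ — or the algebra generated by $N_{\Lambda_v}$ and its normalizer/quasi-normalizer — is amenable: it looks like a tensor product $N_{\Lambda_v} \barotimes N_{\Link(\Lambda_v)}$ pattern with amenable pieces, or at worst quasi-strongly solid by \cref{thm3.1}/\cref{Lem=FreeR}. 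But $M_v = R$ is a II$_1$-factor with non-amenable relative commutant $M_v' \cap M_\Gamma = M_{\Link(v)}$ (since $v$ is internal, $\Link(v)$ is not complete, so $M_{\Link(v)} = R_{\Link(v)}$ is non-amenable by the Remark after the definition of internal set). Quasi-strong solidity of the relevant corner of $N_\Lambda$ then forces a contradiction: the quasi-normalizer of the diffuse amenable algebra $M_v$ inside a quasi-strongly solid piece would have to be amenable, yet it contains the non-amenable $M_{\Link(v)} \subseteq \qNor_{N_\Lambda}(M_v)''$. Hence $\Link(\Lambda_v)$ is not complete, i.e.\ $\Lambda_v \in \Int(\Lambda)$.

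Finally, once we know $M_v \prec_M N_{\Lambda_v}$ with $\Lambda_v$ internal, we are done — this is exactly the asserted conclusion. One should double-check that $\Lambda_v$ can be taken to be an \emph{internal set} (not merely a subgraph with non-complete link of the larger graph); but since "internal set" is defined precisely by $\Link(\Lambda_v)$ being non-complete, the previous paragraph delivers this directly, and $\Lambda_v \neq \emptyset$ because $M_v$ is diffuse (a diffuse algebra cannot embed into $N_\emptyset = \CC$).

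The main obstacle I anticipate is the first step: rigorously extracting the minimal support subgraph $\Lambda_v$ and showing that $M_v' \cap N_\Lambda$ is "as small as" $N_{\Link(\Lambda_v)}$ up to intertwining. This requires a careful commutation/intertwining analysis inside graph products — the kind of bookkeeping with reduced words and syllable length that appears in \cite{caspersGraphProductsOperator2017, BCC} — and the subtlety is that $M_v$ is amenable, so the non-amenable embedding theorem (Theorem I of \cite{BCC}) is unavailable; one must instead lean on quasi-strong solidity (\cref{thm3.1}) as the substitute rigidity input, which is precisely where the Peterson--Thom resolution enters.
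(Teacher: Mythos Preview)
Your overall strategy matches the paper's, but there is a misreading in Step~1 and a genuine gap in Step~2.

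\textbf{Step 1 is not the obstacle.} You write that ``$M_v$ is amenable, so the non-amenable embedding theorem (Theorem I of \cite{BCC}) is unavailable.'' In fact Theorem~I of \cite{BCC} applies precisely here: its input is that the \emph{relative commutant} $M_v' \cap M = M_{\Link(v)}$ is non-amenable, which is exactly what ``$v$ is internal'' provides. The paper invokes Theorem~I directly to obtain $\Lambda_v\subseteq\Lambda$ with $\Link(\Lambda_v)\neq\emptyset$ and $M_v \prec_M N_{\Lambda_v}$; local finiteness of $\Lambda$ forces $\Lambda_v$ to be finite, and one then passes to a minimal such choice. Your anticipated ``main obstacle'' therefore dissolves.

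\textbf{Step 2 does not work as written.} You want to conclude that the quasi-normalizer of $M_v$ inside ``a quasi-strongly solid piece'' is amenable, contradicting $M_{\Link(v)}\subseteq \qNor_{N_\Lambda}(M_v)''$. There are two problems. First, $M_v \prec_M N_{\Lambda_v}$ is an intertwining, not an inclusion, so $M_v$ does not literally sit inside $N_{\Lambda_v}$. Second, your candidate ambient algebras are \emph{not} quasi-strongly solid: under your contradiction hypothesis $N_{\Lambda_v\cup\Link(\Lambda_v)} = N_{\Lambda_v}\barotimes N_{\Link(\Lambda_v)}$ is $\mathcal{L}(\mathbb{F}_k)\barotimes R$ when $\Lambda_v$ has $k\geq 2$ connected components, and this is not even strongly solid. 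The paper's actual argument transports $\Nor_M(M_v)'' = M_v\barotimes M_{\Link(v)}$ through the intertwining data $(p,q,w,\theta)$ to get an injective $*$-homomorphism into $\qNor_{q_1 M q_1}(\theta(pM_vp))''$, and then proves the tensor decomposition
\[
\qNor_{q_1 M q_1}\bigl(\theta(pM_vp)\bigr)'' \;=\; \qNor_{q_1 N_{\Lambda_v} q_1}\bigl(\theta(pM_vp)\bigr)'' \;\barotimes\; N_{\Link(\Lambda_v)}
\]
via \cite[Proposition 5.8]{BCC} and a slicing argument (\cref{Lem=Slicing}). Quasi-strong solidity is applied only to $N_{\Lambda_v}$ itself --- which \emph{is} quasi-strongly solid by the hypothesis on $\Lambda$ and \cref{thm3.3} --- so the first tensor factor is amenable regardless of $\Link(\Lambda_v)$. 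Since the source $p_1(M_v\barotimes M_{\Link(v)})p_1$ is non-amenable, $N_{\Link(\Lambda_v)}$ must be non-amenable, i.e.\ $\Lambda_v\in\Int(\Lambda)$. No contradiction argument is needed, and the missing ingredient in your sketch is precisely this tensor splitting that isolates the part of the quasi-normalizer living over $N_{\Lambda_v}$.
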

\begin{proof}
    Since $v \in \Int(\Gamma)$ is an internal vertex, we have $M_v' \cap M= M_{\Link(v)}$ is not amenable. By Theorem I of \cite{BCC} there exists a $\Lambda_v \subseteq \Lambda$ with Link($\Lambda_v)\neq \emptyset$ such that  $M_{v} \prec_M N_{\Lambda_v}$.   As $\Lambda$ is locally finite and $\Link(\Lambda_v) \not =  \emptyset$, it must be the case that $\Lambda_v$ is finite.   We may assume that $\Lambda_v$ is minimal with this property meaning that there is no $\Lambda_v' \subsetneq \Lambda_v$ such that $M_v \prec_M N_{\Lambda_v'}$ 
    and note this $\Lambda_v$ is still a finite graph.

    We shall prove that $\Lambda_v \in \Int(\Lambda)$.  As $M_v \prec_M N_{\Lambda_v}$ there exist projections $p \in M_v, q \in N_{\Lambda_v}$, a non-zero partial isometry $w \in q M p$ and a normal $\ast$-homomorphism $\theta: p M_v p \rightarrow q N_{\Lambda_v} q$ such that $\theta(x) w = w x$ for all $x \in p M_{v} p$ and such that $\theta(p M_v p) \not\prec_M N_{\Lambda_v'}$ for any strict subgraph $\Lambda_v'$ of $\Lambda_v$ (see e.g. \cite[Lemma 1.4.5]{ioanaAmalgamatedFreeProducts2008}, \cite[Lemma 2.1]{BC}).

Take $u \in \Nor_M( M_v)$. Since $M_v$ is a factor, there exists a series of partial isometries $\{v_j\}_{j=1}^n$ in $M_v$ such that  $\sum_{j=1}^n v_j v_j^\ast = 1$ and $v_j^\ast v_j \leq p$ for every $j$. Then, 
    \[
        p u p (p M_v p) \subseteq p u M_v =  p M_v u \subseteq \sum_{j=1}^n (p M_v v_j) v_j^\ast u \subseteq  \sum_{j=1}^n (p M_v p) v_j^\ast u. 
    \]
    Similarly, or by taking adjoints, $(p M_v p)  p u p \subseteq  \sum_{j=1}^n   u v_j (p M_v p)$. It follows that $pup \in \qNor_{pMp}(p M_v p)$.
 
    Set $q_1 = \theta(p)$.   Note that $q_1 w = \theta(p) w = w p = w$ and so $q_1 \leq q$ and $w \in q_1 M p$.      
       For any $x \in \qNor_{pMp}(p M_v p)$ it follows by direct verification that $w x w^\ast \in \qNor_{q_1 M q_1}(\theta(p M_v p)) $. Indeed the assumption $x \in \qNor_{pMp}(p M_v p)$  implies that there are $\{x_i\}_{i=1}^n \subseteq p M p$ such that, 
    \[
     \theta(p M_v p) w x w^\ast = w p M_v p  xw^\ast \subseteq w \sum_{i=1}^n  x_i p M_v p w^\ast = \sum_{i=1}^n  w x_i w^\ast \theta(p M_v p), 
    \]
    and so  $w x w^\ast \in \qNor^1_{q_1 M q_1}(\theta(p M_v p)) $.   The inclusion  $w x w^\ast \in \qNor^1_{q_1 M q_1}(\theta(p M_v p))^\ast $ follows similarly.

    From the previous two paragraphs it thus follows that for $u \in \Nor_M( M_v)$ we have $w p u p w^\ast \in \qNor_{q_1 M q_1}(\theta(p M_v p))  $.
    
    Set $p_1 = w^\ast w \in (p M_v p)' \cap p M p$. So $p_1 \in \Nor_{pMp}(p M_v p)''$. Therefore we may define a $\ast$-homomorphism,
    \[
    \rho: p_1 \Nor_{M}( M_v )'' p_1 \rightarrow   \qNor_{q_1 M q_1}(\theta(p M_v p))'': x \mapsto w x w^\ast, 
    \]
    with $w \in  q_1 M p_1$. We note that $\rho$ is   injective.     
    Note further that 
    \[
     \Nor_{M}( M_v )''   =         M_{\Star(v)}   =   M_{v} \barotimes M_{\Link(v)}.
    \]
    
\noindent {\bf Claim:}  We have,
    \[
 \qNor_{q_1 M q_1}(\theta(p M_v p))'' =     \qNor_{q_1 N_{\Lambda_v} q_1}(\theta(p M_v p))''  \barotimes N_{\Link(\Lambda_v)}.    
    \]
    \begin{proof}[Proof of the claim]
    On the one hand, we have 
    $$\qNor_{q_1 N_{\Lambda_v} q_1}(\theta(p M_v p))'' \overline{\otimes} N_{\Link(\Lambda_v)}\subseteq \qNor_{q_1Mq_1}(\theta(pM_vp))''.$$
    On the other hand, by \cite[Proposition 5.8]{BCC},  we have 
    \begin{equation}\label{Eqn=EmbedQuasi}
    \qNor_{q_1Mq_1}(\theta(pM_vp))''\subseteq q_1 N_{\Lambda_v\cup \Link(\Lambda_v)}q_1=q_1N_{\Lambda_v}q_1\overline{\otimes} N_{\Link(\Lambda_v)}.
    \end{equation}
    Now take $x\in \qNor_{q_1Mq_1}(\theta(pM_vp))$. Then for any $y\in\theta(pM_vp)$, there are $x_1,\ldots, x_n\in q_1Mq_1$ and $y_1,\ldots, y_n\in \theta(pM_vp)$ such that 
    $yx = \sum_{i=1}^n x_iy_i$. Then for $\omega\in  (N_{\Link(\Lambda_v)})_*$, we have by viewing $x$ as an element of  $q_1N_{\Lambda_v}q_1\overline{\otimes} N_{\Link(\Lambda_v)}$ through \eqref{Eqn=EmbedQuasi}, that
    \begin{align*}
      y(id\otimes \omega)(x)=&(y\otimes1)(id\otimes \omega)(x) =(id\otimes \omega)((y\otimes 1)x) \\ 
      = & (id\otimes w)(\sum_{i=1}^n x_i(y_i\otimes 1)) =\sum_{i=1}^n(id\otimes \omega)(x_i)y_i. 
    \end{align*}
    Therefore, 
    \[(id\otimes \omega)(x)\in  \qNor_{q_1N_{\Lambda_v}q_1}(\theta(pM_vp))\subseteq  \qNor_{q_1N_{\Lambda_v}q_1}(\theta(pM_vp))''.\]     
Thus by  \cref{Lem=Slicing}, we have $x\in \qNor_{q_1 N_{\Lambda_v} q_1}(\theta(p M_v p))'' \overline{\otimes} N_{\Link(\Lambda_v)}$, which finishes the proof of the claim.
    \end{proof}
     
    In summary we have $p_1(M_v\barotimes M_{\Link(v)})p_1=M_v\barotimes p_1M_{\Link(v)}p_1$ and the injective *-homorphism
    \begin{equation}
    \rho: M_{v} \barotimes p_1 M_{\Link(v)}p_1 \rightarrow \qNor_{q_1 M_{\Lambda_v} q_1}(\theta(p M_v p))''  \barotimes N_{\Link(\Lambda_v)}.
    \end{equation}
         Since $M_{\Link(v)}$ is non-amenable, we have that $p_1 M_{\Link(v)}p_1$ is a non-amenable factor.   Therefore $M_{v} \barotimes p_1M_{\Link(v)}p_1$ is non-amenable and then  $\qNor_{q_1 M_{\Lambda_v} q_1}(\theta(p M_v p))''   \barotimes N_{\Link(\Lambda_v)}$ is also non-amenable. Now as $\theta(p M_v p)$ is amenable and $q_1 M_{\Lambda_v} q_1$ is  quasi-strongly solid by the assumption of $\Lambda$ and \cref{thm3.3}, it follows that $\qNor_{q_1 M_{\Lambda_v} q_1}(\theta(p M_v p))''  $ is amenable. Therefore  $N_{\Link(\Lambda_v)}$ must be non-amenable and hence $\Lambda_v \in \Int(\Lambda)$.   
\end{proof}

\begin{theorem}\label{thm4.7.} 
Let $\Lambda$ and $\Gamma$ be H-rigid graphs. 
Suppose that $M := M_\Gamma \simeq N_\Lambda$ (notation before Theorem \ref{Prop=IntEmbedding}). Then we have an isomorphism of graphs $\Int(\Gamma) \simeq \Int(\Lambda)$.  
\end{theorem}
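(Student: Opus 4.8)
The plan is to use Theorem \ref{Prop=IntEmbedding} symmetrically to both sides of the isomorphism $M_\Gamma \simeq N_\Lambda$ and show the resulting maps $v \mapsto \Lambda_v$ and (going the other way) $w \mapsto \Gamma_w$ are mutually inverse graph isomorphisms $\Int(\Gamma) \to \Int(\Lambda)$. First I would note that since $\Gamma$ and $\Lambda$ are both H-rigid, all their internal sets are internal vertices, so the conclusion of Theorem \ref{Prop=IntEmbedding} reads: for every $v \in \Int(\Gamma)$ there is a (single) internal vertex $\Lambda_v \in \Int(\Lambda)$ with $M_v \prec_M N_{\Lambda_v}$; applying the theorem with the roles of $\Gamma$ and $\Lambda$ exchanged, for every $w \in \Int(\Lambda)$ there is $\Gamma_w \in \Int(\Gamma)$ with $N_w \prec_M M_{\Gamma_w}$.

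Next I would establish that these maps are inverse to each other. Fix $v \in \Int(\Gamma)$ and set $w = \Lambda_v$, so $M_v \prec_M N_w$. I want to conclude $\Gamma_w = v$, i.e.\ that $N_w \prec_M M_{\Gamma_w}$ forces $\Gamma_w = v$ whenever $M_v \prec_M N_w$. The idea is to upgrade the embeddings to \emph{stable} embeddings and then compose them using \cref{Lem=StableEmbedding}. Since $M_v$ is a II$_1$-factor, $M_v' \cap M = M_{\Link(v)}$, whose center is trivial (it is a factor), so by part \eqref{Item=StableEmbedding:condition} of \cref{Lem=StableEmbedding} the embedding $M_v \prec_M N_w$ is automatically stable: the only nonzero projection $z$ in $\Nor_M(M_v)' \cap M \subseteq \calZ(M_v' \cap M) = \CC$ is $z = 1$, so $M_v \prec_M N_w$ already gives $M_v \prec^s_M N_w$. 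Likewise $N_w \prec^s_M M_{\Gamma_w}$. Now from $M_v \prec_M N_w$ and $N_w \prec^s_M M_{\Gamma_w}$, part \eqref{Item=StableEmbedding:transative} gives $M_v \prec_M M_{\Gamma_w}$; then \cref{Lem=Selfembedding} (applied with $N = M_v$ diffuse inside the vertex algebra at $v$, and the subgraph $\Lambda = \Gamma_w$) forces $v \in \Gamma_w$, and since $\Gamma_w$ is a single vertex, $\Gamma_w = v$. Symmetrically, starting from $w \in \Int(\Lambda)$ and composing $N_w \prec_M M_{\Gamma_w}$ with $M_{\Gamma_w} \prec^s_M N_{\Lambda_{\Gamma_w}}$ gives $\Lambda_{\Gamma_w} = w$. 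Hence $v \mapsto \Lambda_v$ is a bijection $\Int(\Gamma) \to \Int(\Lambda)$ with inverse $w \mapsto \Gamma_w$.

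Finally I would check this bijection preserves edges in both directions. Suppose $v, v' \in \Int(\Gamma)$ share an edge in $\Gamma$; I want $\Lambda_v, \Lambda_{v'}$ to share an edge in $\Lambda$. Since $v' \in \Link(v)$, we have $M_{v'} \subseteq M_{\Link(v)} = M_v' \cap M$, so $M_{v'}$ commutes with $M_v$. Using $M_v \prec_M N_{\Lambda_v}$ with intertwiner data $(p, q, \theta, w)$ and the relative commutant computation $N_{\Lambda_v}' \cap M \supseteq N_{\Link(\Lambda_v)}$, one transports the commuting copy of $M_{v'}$ through $w$ into an algebra that commutes with $\theta(pM_vp)$ inside a corner of $N_{\Lambda_v} \barotimes N_{\Link(\Lambda_v)}$; I would argue (via \cref{Lem=StableEmbedding} and \cref{Lem=Selfembedding}, or directly via the quasi-normalizer computation in the proof of Theorem \ref{Prop=IntEmbedding}) that $M_{v'} \prec_M N_{\Lambda_{v'}}$ combined with $M_{v'}$ commuting with $M_v$ forces $\Lambda_{v'} \in \Star(\Lambda_v)$, and since $\Lambda_v \neq \Lambda_{v'}$ (bijectivity) this means $\Lambda_{v'} \in \Link(\Lambda_v)$. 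The converse implication follows by applying the same argument to the inverse map. I expect \textbf{this last step — transporting the edge/commutation relation across the embedding — to be the main obstacle}: one must be careful that the partial isometry $w$ and the homomorphism $\theta$ interact correctly with the tensor splitting $N_{\Star(\Lambda_v)} = N_{\Lambda_v} \barotimes N_{\Link(\Lambda_v)}$, and that minimality of $\Lambda_v$ (as arranged in the proof of Theorem \ref{Prop=IntEmbedding}) is genuinely used to pin down $\Lambda_{v'}$ rather than just locate it in some larger subgraph; the rest is bookkeeping with Popa intertwining that the cited lemmas handle cleanly.
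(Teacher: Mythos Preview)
Your bijection argument is correct and matches the paper's: apply Theorem~\ref{Prop=IntEmbedding} in both directions, upgrade the resulting vertex-to-vertex intertwinings to stable ones via factoriality and \cref{Lem=StableEmbedding}\eqref{Item=StableEmbedding:condition}, compose, and invoke \cref{Lem=Selfembedding}. (The paper also separately disposes of the case $\Int(\Gamma)=\emptyset$, which you omit but which is immediate.)

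The edge-preservation step is where your proposal diverges from the paper, and where the real gap sits. You work from the \emph{forward} embedding $M_v \prec_M N_{\Lambda_v}$ and propose to push the commuting copy of $M_{v'}$ through the partial isometry $w$ into a corner of $N_{\Star(\Lambda_v)}$, hoping then to pin $\Lambda_{v'}$ into $\Link(\Lambda_v)$. This is exactly the obstacle you flag, and it is genuine: the projection $p_1 = w^\ast w$ lies in $(pM_vp)'\cap pMp$ but has no reason to lie in $M_{v'}$ or its relative commutant, so ``$p_1 M_{v'} p_1$'' is not a corner that yields an honest intertwining $M_{v'}\prec_M N_{\Star(\Lambda_v)}$; and the quasi-normalizer claim from Theorem~\ref{Prop=IntEmbedding} only places $\rho(p_1 M_{v'} p_1)$ inside a tensor product, which again is not the same as producing Popa intertwining data for $M_{v'}$. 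So the two hypotheses you list (that $M_{v'}\prec_M N_{\Lambda_{v'}}$ and that $M_{v'}$ commutes with $M_v$) do not, by the route you sketch, force $\Lambda_{v'}\in\Star(\Lambda_v)$.

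The paper sidesteps this entirely by using the \emph{reverse} embedding already obtained in the bijection step, namely $N_{\alpha(v)} \prec_M M_{\beta(\alpha(v))} = M_v$, and then applying the relative-commutant transfer \cite[Lemma~3.5]{vaesExplicitComputationsAll2008} (together with factoriality) to get $M_{\Link(v)} \prec_M N_{\Link(\alpha(v))}$ directly. Since $M_{v'}\subseteq M_{\Link(v)}$ this gives $M_{v'}\prec_M N_{\Link(\alpha(v))}$; upgrade to a stable embedding, compose with $N_{\alpha(v')}\prec_M M_{v'}$ via \cref{Lem=StableEmbedding}\eqref{Item=StableEmbedding:transative}, and conclude $\alpha(v')\in\Link(\alpha(v))$ by \cref{Lem=Selfembedding}. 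Note the commutant-flip lemma sends $A\prec_M B$ to $B'\cap M \prec_M A'\cap M$, so starting from your forward embedding would yield $N_{\Link(\Lambda_v)}\prec_M M_{\Link(v)}$, the wrong direction; the reverse embedding is essential. Once you insert this one external lemma applied to the reverse embedding, the bookkeeping you anticipated really is routine.
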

\begin{proof}
First suppose that $\Int(\Gamma)=\emptyset$. Assume that $\Int(\Lambda)\neq \emptyset$, i.e. there exists $v_0\in \Int(\Lambda)$. By \cref{Prop=IntEmbedding}, there exists a non-empty subgraph $\Gamma_{v_0}\in \Int(\Gamma)$. Therefore $\Int(\Gamma)\neq \emptyset$ and we get a contradiction. Thus if $\Int(\Gamma)=\emptyset$, then $\Int(\Lambda)=\Int(\Gamma)=\emptyset$.

Now we assume that $\Int(\Gamma)\neq \emptyset$.
Take $v \in \Int(\Gamma)$ so that $M_v' \cap M = M_{\Link(v)}$ is non-amenable. By \cref{Prop=IntEmbedding} and the fact that all internal sets are singletons there exists an internal vertex $\alpha(v) \in \Int(\Lambda)$ such that $M_v \prec_M N_{\alpha(v)}$. Similarly for $w \in \Int(\Lambda)$ there exists an internal vertex $\beta(w) \in \Int(\Gamma)$ such that $N_w \prec_M M_{\beta(w)}$.  By   \cref{Lem=StableEmbedding}   it follows that $M_v \prec_M M_{\beta(\alpha(v))}$ and so $v = \beta(\alpha(v))$ by \cref{Lem=Selfembedding}. Similarly  $w =  \alpha(\beta(w))$. So $\alpha$ and $\beta$ are unique and inverses of each other. 

Take $v \in \Int(\Gamma)$.  We know that $N_{\alpha(v)}\prec_{M} M_{\beta(\alpha(v))} = M_v$. Taking relative commutants \cite[Lemma 3.5]{vaesExplicitComputationsAll2008}, using again factoriality of the vertex von Neumann algebras,  we find  
 $ M_{\Link(v)}   \prec_{M}   N_{\Link(\alpha(v))}$.
Now take $v' \in \Link(v) \cap \Int(\Gamma)$ so that the latter embedding gives  $M_{v'}  \prec_{M}   N_{\Link(\alpha(v))}$, hence $M_{v'}  \prec_{M}^s  N_{\Link(\alpha(v))}$ by \cref{Lem=StableEmbedding}. Then again by \cref{Lem=StableEmbedding}  we obtain
$N_{\alpha(v')}  \prec_{N}   N_{\Link(\alpha(v))}$.  This then implies by   \cref{Lem=Selfembedding} that
$\alpha(v')  \in    \Link(\alpha(v))$. So we conclude that $\alpha$ preserves edges. Similarly $\beta$ preserves edges, and it follows that $\alpha: \Int(\Gamma) \to \Int(\Lambda)$ is a graph isomorphism.

\end{proof}

  Using \cref{thm4.7.} we can classify $R_{\Gamma}$ for all H-rigid graphs satisfying $\Int(\Gamma) = \Gamma$, as well as $R_{l_n}$.

\begin{corollary}
Consider two H-rigid graphs $\Gamma$ and  $\Lambda$ such that $\Gamma = \Int(\Gamma)$ and $\Lambda = \Int(\Lambda)$. If $R_{\Gamma}\simeq R_{\Lambda}$, then $\Gamma\simeq \Lambda$. 
\end{corollary}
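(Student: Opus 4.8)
The plan is to invoke \cref{thm4.7.} directly. Since $\Gamma$ and $\Lambda$ are H-rigid, they satisfy all the hypotheses of that theorem. First I would set $M_v = R$ for every $v \in \Gamma$ and $N_w = R$ for every $w \in \Lambda$ as in the notation preceding \cref{Prop=IntEmbedding}, so that $M_\Gamma$ is by definition the graph product $R_\Gamma$ and $N_\Lambda = R_\Lambda$. The hypothesis $R_\Gamma \simeq R_\Lambda$ then says precisely $M_\Gamma \simeq N_\Lambda$, which is the input required by \cref{thm4.7.}.

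Applying \cref{thm4.7.} yields a graph isomorphism $\Int(\Gamma) \simeq \Int(\Lambda)$. Finally I would use the extra assumptions $\Gamma = \Int(\Gamma)$ and $\Lambda = \Int(\Lambda)$ to rewrite this as $\Gamma \simeq \Lambda$, completing the proof.

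There is essentially no obstacle here: this corollary is a pure specialization of \cref{thm4.7.} to the case where every vertex is internal, and the only content is unwinding the notation $R_\Gamma = M_\Gamma$ (which was set up when the graph product of hyperfinite II$_1$-factors was defined, using that $R = \mathcal{L}(S_\infty)$ and that graph products of such algebras are independent of the chosen amenable icc groups). Thus the proof is a one-line deduction. If one wanted to be slightly more careful, the only point worth a remark is that $R_\Gamma$ as originally defined via $\mathcal{L}(\ast_{v,\Gamma} S_\infty)$ agrees with $M_\Gamma = \ast_{v,\Gamma}(R, \tau)$ as a graph product of von Neumann algebras, which was already noted in the preliminaries via $\mathcal{L}(G_\Gamma) = \ast_{v,\Gamma}\mathcal{L}(G_v)$.

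\begin{proof}
Set $M_v = R$ for all $v \in \Gamma$ and $N_w = R$ for all $w \in \Lambda$. Then $M_\Gamma = R_\Gamma$ and $N_\Lambda = R_\Lambda$, so the hypothesis $R_\Gamma \simeq R_\Lambda$ is exactly $M_\Gamma \simeq N_\Lambda$. Since $\Gamma$ and $\Lambda$ are H-rigid, \cref{thm4.7.} gives a graph isomorphism $\Int(\Gamma) \simeq \Int(\Lambda)$. As $\Gamma = \Int(\Gamma)$ and $\Lambda = \Int(\Lambda)$, this is a graph isomorphism $\Gamma \simeq \Lambda$.
\end{proof}
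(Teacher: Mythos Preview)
Your proof is correct and matches the paper's approach exactly: the corollary is stated in the paper without proof, as it is an immediate specialization of \cref{thm4.7.} under the hypotheses $\Gamma = \Int(\Gamma)$ and $\Lambda = \Int(\Lambda)$. Your write-up simply makes this one-line deduction explicit.
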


An infinite regular tree is a rooted tree in which the root has $d-1$ neighbors and any other point has $d$ neighbors.

\begin{example}
We give some typical examples of H-rigid graphs whose internal graphs are equal to themselves:

    \begin{enumerate}
        \item $\mathbb{Z}_n$, $n\geq 3$;
        \item All infinite trees without leaves whose root has more than one offspring;
        \item In particular, all infinite regular trees.
    \end{enumerate}
\end{example}

\begin{corollary}\label{coro4.10}
If $R_{l_n}\simeq R_{l_m}, 2 \leq n,m \leq \infty$, then m=n. 
\end{corollary}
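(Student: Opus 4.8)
The plan is to derive \cref{coro4.10} as an application of \cref{thm4.7.}, distinguishing the finite and infinite cases. By \cref{prop4.5.} every line $l_n$ with $2 \leq n \leq \infty$ is H-rigid, so \cref{thm4.7.} applies: from $R_{l_n} \simeq R_{l_m}$ we obtain a graph isomorphism $\Int(l_n) \simeq \Int(l_m)$. Thus the first step is to compute $\Int(l_n)$ explicitly. Recall that a vertex $v$ is internal exactly when $\Link(v)$ is not complete, equivalently (by the remark) when $R_{\Link(v)}$ is non-amenable. For the finite line $l_n$ with $n \geq 4$, the endpoints $1$ and $n$ have $\Link(1) = \{2\}$ and $\Link(n) = \{n-1\}$, each a single vertex (hence a complete graph), so $1, n$ are external; every interior vertex $i$ with $2 \leq i \leq n-1$ has $\Link(i) = \{i-1, i+1\}$, two non-adjacent vertices, so $i$ is internal. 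Hence $\Int(l_n)$ is the line on vertices $\{2, \ldots, n-1\}$, which is isomorphic to $l_{n-2}$. For $l_\infty$, every vertex $i \in \mathbb{Z}$ has $\Link(i) = \{i-1, i+1\}$ non-adjacent, so $\Int(l_\infty) = l_\infty$. This recovers and uses the computation already sketched in the $l_5$ example.

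Next I would translate the graph isomorphism $\Int(l_n) \simeq \Int(l_m)$ into the equality $n = m$. The small finite cases must be treated by hand: for $n = 2$, $\Int(l_2) = \emptyset$ (both vertices are endpoints with one-point, hence complete, links); for $n = 3$, $\Int(l_3)$ is the single vertex $2$; for $n \geq 4$, $\Int(l_n) \simeq l_{n-2}$; and for $n = \infty$, $\Int(l_\infty) = l_\infty$. Now suppose $R_{l_n} \simeq R_{l_m}$. If one of $n, m$ is $\infty$, then one of $\Int(l_n), \Int(l_m)$ is the infinite line and the other must be isomorphic to it; since no finite graph is isomorphic to $l_\infty$ (e.g. by cardinality of the vertex set), we must have $n = m = \infty$. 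If both are finite, then $\Int(l_n) \simeq \Int(l_m)$ forces (by matching number of vertices: $\Int(l_n)$ has $\max(n-2, 0)$ vertices when $n \geq 2$, with the single-point and empty cases recorded above) that $n - 2 = m - 2$ in the range $n, m \geq 3$, and the boundary cases $n = 2$ (empty internal graph) and $n = 3$ (one-vertex internal graph) are isolated and match only themselves. In all cases $n = m$.

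The main obstacle is not conceptual but bookkeeping: one must be careful that \cref{thm4.7.} gives an isomorphism of the internal graphs, not of the original graphs, so the reduction $l_n \rightsquigarrow l_{n-2}$ is built into the statement and the conclusion $n = m$ only follows once one knows that the map $n \mapsto \Int(l_n)$ is injective on isomorphism classes — which is exactly the case-analysis above, its only subtlety being the degenerate small $n$ and the separation of the finite from the infinite case. One could also phrase the finite-case injectivity more slickly: $l_{n-2}$ (for $n \geq 4$) has exactly two vertices of degree $1$ and $n - 4$ vertices of degree $2$, so its isomorphism type determines $n$; but counting vertices is the cleanest route and avoids any edge cases with $l_2, l_3, l_4$. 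No deep input beyond \cref{thm4.7.} and \cref{prop4.5.} is needed.
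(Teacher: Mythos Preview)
Your proposal is correct and follows essentially the same approach as the paper: apply \cref{prop4.5.} and \cref{thm4.7.} to obtain $\Int(l_n)\simeq\Int(l_m)$, then separate the infinite case by cardinality and the finite case by vertex count. The paper's write-up is slightly more uniform in the finite case (it just uses $|l_n|=|\Int(l_n)|+2$ for all $2\leq n<\infty$, which absorbs your $n=2,3$ boundary cases), but the argument is the same.
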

\begin{proof}
    By \cref{prop4.5.}, we know that lines are H-rigid. Therefore by \cref{thm4.7.}, we have if $R_{l_n}\simeq R_{l_m}$, then $\Int(l_n)\simeq \Int(l_m)$.
    
    When $n=\infty$, if $R_{l_{\infty}}\simeq R_{l_m}$, then $\Int(l_m)\simeq\Int(l_{\infty})\simeq l_{\infty}$. We deduce that $m=\infty$. When $n<\infty$ we have $|l_n|=|\Int(l_n)|+2$. If $R_{l_n}\simeq R_{l_m}$, then $|\Int(l_n)|=|\Int(l_m)|<\infty$. Therefore, $m<\infty$ and $m=|l_m|=|\Int(l_m)|+2=|\Int(l_n)|+2=|l_n|=n$.   
\end{proof} 

In addition, using \cref{thm4.7.} we can also improve our previous graph radius rigidity result (Theorem F in \cite{BCC}) for graph products of hyperfinite II$_1$-factors over H-rigid graphs.
\begin{corollary}\label{coro4.11}
    For two H-rigid graphs $\Gamma,\Lambda$, if $R_{\Gamma}\simeq R_{\Lambda}$, then \[|\Radius(\Gamma)-\Radius(\Lambda)|\leq 1.\]
\end{corollary}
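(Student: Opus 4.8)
The plan is to combine the graph isomorphism $\Int(\Gamma) \simeq \Int(\Lambda)$ coming from \cref{thm4.7.} with the estimate $\Radius(\Gamma) \leq \Radius(\Int(\Gamma)) + 1$ from \cref{lemma4.3}, using the H-rigidity of $\Gamma$ and $\Lambda$ (so in particular their internal sets are all internal vertices, which is what \cref{lemma4.3} requires). First I would dispose of degenerate cases: if $\Gamma$ (or $\Lambda$) is empty then $R_\Gamma$ is trivial in the relevant sense and the other must be empty too, and if $\Int(\Gamma) = \emptyset$ then $\Gamma$ is complete, $\Radius(\Gamma) = 1$ (or $\Gamma = \emptyset$), and by \cref{thm4.7.} we get $\Int(\Lambda) = \emptyset$ as well, so $\Lambda$ is complete and $\Radius(\Lambda) = 1$, giving the bound trivially.

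In the main case $\Int(\Gamma), \Int(\Lambda) \neq \emptyset$, the key point is that \cref{thm4.7.} gives a graph isomorphism $\Int(\Gamma) \simeq \Int(\Lambda)$, and graph isomorphisms preserve the radius, so $\Radius(\Int(\Gamma)) = \Radius(\Int(\Lambda))$. I would then write, using \cref{lemma4.3} applied to $\Gamma$,
\begin{align*}
\Radius(\Gamma) &\leq \Radius(\Int(\Gamma)) + 1 = \Radius(\Int(\Lambda)) + 1 \leq \Radius(\Lambda) + 1,
\end{align*}
where the final inequality is the trivial bound $\Radius(\Int(\Lambda)) \leq \Radius(\Lambda)$ coming from the fact that $\Int(\Lambda)$ is a subgraph of $\Lambda$ — but one must be slightly careful here, since distances in a subgraph can only be larger, so $\Radius$ of a subgraph need not be bounded by $\Radius$ of the ambient graph in general. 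The safe way is instead to use only \cref{lemma4.3} in both directions: $\Radius(\Gamma) \leq \Radius(\Int(\Gamma)) + 1$ and $\Radius(\Lambda) \leq \Radius(\Int(\Lambda)) + 1$, together with $\Radius(\Int(\Gamma)) = \Radius(\Int(\Lambda)) =: r$. To close the estimate I still need a lower bound relating $\Radius(\Gamma)$ to $r$; the cleanest is to observe that by \cref{prop4.2} every external vertex of $\Gamma$ is adjacent to an internal vertex, so removing external vertices changes eccentricities by at most $1$, which also gives $\Radius(\Int(\Gamma)) \leq \Radius(\Gamma) + 1$, i.e. $r \leq \Radius(\Gamma) + 1$. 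Combining $\Radius(\Gamma) \leq r + 1$ and $r \leq \Radius(\Lambda) + 1$ with the symmetric pair yields $|\Radius(\Gamma) - \Radius(\Lambda)| \leq 2$ — which is only Theorem F's bound, not the sharpened one.

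To actually get the bound $1$ I would argue more carefully: I claim that for an H-rigid graph $\Gamma$ with $\Int(\Gamma) \neq \emptyset$ one has the two-sided estimate $\Radius(\Int(\Gamma)) \leq \Radius(\Gamma) \leq \Radius(\Int(\Gamma)) + 1$. The upper bound is \cref{lemma4.3}. For the lower bound, pick $t \in \Gamma$ realizing the radius of $\Gamma$; if $t$ is internal it bounds the radius of $\Int(\Gamma)$ directly (distances within $\Int(\Gamma)$ between internal vertices equal distances in $\Gamma$, using H-rigidity to rule out shortcuts through non-complete-neighborhood configurations — this needs the condition that connected components of finite subgraphs with nonempty link are complete), and if $t$ is external then by \cref{prop4.2} it has an internal neighbour $t'$, and $\sup_{w \in \Int(\Gamma)} \Dist(t', w) \leq \sup_{w} \Dist(t, w) + 1 = \Radius(\Gamma) + 1$, but one needs to do slightly better — in fact when $t$ is external, every vertex at maximal distance from $t$ forces $t'$ to be closer, and a short case analysis should give $\Radius(\Int(\Gamma)) \leq \Radius(\Gamma)$. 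Granting this two-sided estimate and $\Radius(\Int(\Gamma)) = \Radius(\Int(\Lambda)) = r$, we get $r \leq \Radius(\Gamma) \leq r+1$ and $r \leq \Radius(\Lambda) \leq r+1$, hence $|\Radius(\Gamma) - \Radius(\Lambda)| \leq 1$, as desired.

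The main obstacle is precisely the lower bound $\Radius(\Int(\Gamma)) \leq \Radius(\Gamma)$: the worry is that a geodesic center $t$ of $\Gamma$ which is an external vertex might be ``genuinely closer'' to everything than any internal vertex is to all of $\Int(\Gamma)$, inflating $\Radius(\Int(\Gamma))$ relative to $\Radius(\Gamma)$. Resolving this requires exploiting that external vertices are, by \cref{prop4.2} and H-rigidity, quite tightly attached (each external vertex hangs off an internal vertex, and the rigidity hypothesis on finite subgraphs with nonempty link forces the ``pendant'' structure to be very constrained), so that replacing an external center by its unique-up-to-equivalence internal neighbour costs nothing in eccentricity measured over $\Int(\Gamma)$. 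I expect this to be a short but slightly delicate combinatorial lemma about H-rigid graphs, after which the corollary follows by the chain of inequalities above together with \cref{thm4.7.}.
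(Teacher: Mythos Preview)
Your overall strategy matches the paper's exactly: combine \cref{thm4.7.} with \cref{lemma4.3}. The paper's argument is just three lines --- assuming $\Radius(\Gamma)\geq\Radius(\Lambda)$ it writes
\[
\Radius(\Gamma)-\Radius(\Lambda)\;\leq\;\Radius(\Int(\Gamma))+1-\Radius(\Lambda)\;=\;\Radius(\Int(\Lambda))+1-\Radius(\Lambda)\;\leq\;1,
\]
so the final step is precisely the inequality $\Radius(\Int(\Lambda))\leq\Radius(\Lambda)$, which the paper uses without comment. You are right that this is not automatic for an arbitrary subgraph, but the justification is shorter than you suspect and does \emph{not} require condition~(3) of H-rigidity. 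Two elementary observations suffice. First, no external vertex $u$ can be an interior point of a geodesic in $\Lambda$: its two neighbours on the geodesic lie in the complete graph $\Link(u)$ and are therefore already adjacent, giving a shortcut. Hence $\Dist_{\Int(\Lambda)}=\Dist_{\Lambda}$ on pairs of internal vertices. Second, if a center $t_0$ of $\Lambda$ is external, choose an internal neighbour $t_0'$ via \cref{prop4.2}; since every path leaving $t_0$ begins in the clique $\Link(t_0)\ni t_0'$, one gets $\Dist_{\Lambda}(t_0',w)\leq\Dist_{\Lambda}(t_0,w)$ for all $w\neq t_0$, so $t_0'$ is again a center. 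Thus $\Lambda$ always has an \emph{internal} center $t_0'$, and
\[
\Radius(\Int(\Lambda))\;\leq\;\sup_{w\in\Int(\Lambda)}\Dist_{\Lambda}(t_0',w)\;\leq\;\Radius(\Lambda).
\]
With this in hand the paper's chain goes through verbatim; your detour through the weaker bound $2$ and your appeal to condition~(3) are both unnecessary.
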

\begin{proof}
Without loss of generality, we may assume that $\Radius(\Gamma)\geq \Radius(\Lambda)$.
    By \cref{lemma4.3}, we have 
    \begin{align*}    
    |\Radius(\Gamma)-\Radius(\Lambda)|&=\Radius(\Gamma)-\Radius(\Lambda)\\&\leq\Radius(\Int(\Gamma))+1-\Radius(\Lambda)\\&=\Radius(\Int(\Lambda))+1-\Radius(\Lambda)\leq 1.
    \end{align*}
   This concludes the proof.  
\end{proof}

\printbibliography

@article {PetersonThom,
    AUTHOR = {Peterson, Jesse and Thom, Andreas},
     TITLE = {Group cocycles and the ring of affiliated operators},
   JOURNAL = {Invent. Math.},
  FJOURNAL = {Inventiones Mathematicae},
    VOLUME = {185},
      YEAR = {2011},
    NUMBER = {3},
     PAGES = {561--592},
      ISSN = {0020-9910,1432-1297},
   MRCLASS = {22D10 (22D25 37A20)},
  MRNUMBER = {2827095},
MRREVIEWER = {Anders\ Karlsson},
       DOI = {10.1007/s00222-011-0310-2},
       URL = {https://doi.org/10.1007/s00222-011-0310-2},
}

@article{caspersGraphProductsOperator2017,
  title = {Graph Products of Operator Algebras},
  author = {Caspers, Martijn and Fima, Pierre},
  year = {2017},
  month = mar,
  journal = {Journal of Noncommutative Geometry},
  volume = {11},
  number = {1},
  pages = {367--411},
  issn = {1661-6952},
  doi = {10.4171/jncg/11-1-9},
  urldate = {2021-06-10},
  abstract = {{\textbar} published in Journal of Noncommutative Geometry},
  langid = {english},
  file = {C:\Users\mborst6\Zotero\storage\VEUCZ3JB\Caspers en Fima - 2017 - Graph products of operator algebras.pdf}
}

@article {charlesworthStructureGraphProduct2024,
    AUTHOR = {Charlesworth, Ian and de Santiago, Rolando and Hayes, Ben and
              Jekel, David and Kunnawalkam Elayavalli, Srivatsav and Nelson,
              Brent},
     TITLE = {On the structure of graph product von {N}eumann algebras},
   JOURNAL = {Publ. Res. Inst. Math. Sci.},
  FJOURNAL = {Publications of the Research Institute for Mathematical
              Sciences},
    VOLUME = {61},
      YEAR = {2025},
    NUMBER = {4},
     PAGES = {713--762},
      ISSN = {0034-5318,1663-4926},
   MRCLASS = {46L10 (46L54)},
  MRNUMBER = {4975999},
       DOI = {10.4171/prims/61-4-3},
       URL = {https://doi-org.tudelft.idm.oclc.org/10.4171/prims/61-4-3},
}

@article {ChifanDaviesDrimbeI,
    AUTHOR = {Chifan, Ionut and Davis, Michael and Drimbe, Daniel},
     TITLE = {Rigidity for von {N}eumann algebras of graph product groups
              {I}: {S}tructure of automorphisms},
   JOURNAL = {Anal. PDE},
  FJOURNAL = {Analysis \& PDE},
    VOLUME = {18},
      YEAR = {2025},
    NUMBER = {5},
     PAGES = {1119--1146},
      ISSN = {2157-5045,1948-206X},
   MRCLASS = {46L10 (20E06 46L36)},
  MRNUMBER = {4904384},
       DOI = {10.2140/apde.2025.18.1119},
       URL = {https://doi-org.tudelft.idm.oclc.org/10.2140/apde.2025.18.1119},
}

@article {ChifanDaviesDrimbeII,
    AUTHOR = {Chifan, Ionut and Davis, Michael and Drimbe, Daniel},
     TITLE = {Rigidity for von {N}eumann algebras of graph product groups
              {II}. {S}uperrigidity results},
   JOURNAL = {J. Inst. Math. Jussieu},
  FJOURNAL = {Journal of the Institute of Mathematics of Jussieu. JIMJ.
              Journal de l'Institut de Math\'ematiques de Jussieu},
    VOLUME = {24},
      YEAR = {2025},
    NUMBER = {1},
     PAGES = {117--156},
      ISSN = {1474-7480,1475-3030},
   MRCLASS = {46L10 (46L36)},
  MRNUMBER = {4847116},
       DOI = {10.1017/S147474802400015X},
       URL = {https://doi.org/10.1017/S147474802400015X},
}

@article{drimbePrimeII1Factors2019,
  title = {Prime II$_1$ Factors Arising from Irreducible Lattices in Products of Rank One Simple {{Lie}} Groups},
  author = {Drimbe, Daniel and Hoff, Daniel and Ioana, Adrian},
  year = {2019},
  month = dec,
  journal = {Journal f{\"u}r die reine und angewandte Mathematik (Crelles Journal)},
  volume = {2019},
  number = {757},
  pages = {197--246},
  publisher = {De Gruyter},
  issn = {1435-5345},
  doi = {10.1515/crelle-2017-0039},
  urldate = {2023-12-08},
  abstract = {We prove that if {$\Gamma$} is an icc irreducible lattice in a product of connected non-compact rank one simple Lie groups with finite center, then the II1\{{\textbackslash}mathrm\{II\}\_\{1\}\} factor L⁢({$\Gamma$})\{L({\textbackslash}Gamma)\} is prime. In particular, we deduce that the II1\{{\textbackslash}mathrm\{II\}\_\{1\}\} factors associated to the arithmetic groups PSL2⁢({$\mathbb{Z}$}⁢[d])\{{\textbackslash}mathrm\{PSL\}\_\{2\}({\textbackslash}mathbb\{Z\}[{\textbackslash}sqrt\{d\}])\} and PSL2⁢({$\mathbb{Z}$}⁢[S-1])\{{\textbackslash}mathrm\{PSL\}\_\{2\}({\textbackslash}mathbb\{Z\}[S{\textasciicircum}\{-1\}])\} are prime for any square-free integer d{$\geq$}2\{d{\textbackslash}geq 2\} with d{$\not\equiv$}1⁢(mod⁡\,4)\{d{\textbackslash}not{\textbackslash}equiv 1{\textasciitilde}\{\}({\textbackslash}operatorname\{mod\}{\textbackslash},4)\} and any finite non-empty set of primes S . This provides the first examples of prime II1\{{\textbackslash}mathrm\{II\}\_\{1\}\} factors arising from lattices in higher rank semisimple Lie groups. More generally, we describe all tensor product decompositions of L⁢({$\Gamma$})\{L({\textbackslash}Gamma)\} for icc countable groups {$\Gamma$} that are measure equivalent to a product of non-elementary hyperbolic groups. In particular, we show that L⁢({$\Gamma$})\{L({\textbackslash}Gamma)\} is prime, unless {$\Gamma$} is a product of infinite groups, in which case we prove a unique prime factorization result for L⁢({$\Gamma$})\{L({\textbackslash}Gamma)\}.},
  chapter = {Journal f{\"u}r die reine und angewandte Mathematik},
  copyright = {De Gruyter expressly reserves the right to use all content for commercial text and data mining within the meaning of Section 44b of the German Copyright Act.},
  langid = {english},
  file = {C:\Users\mborst6\Zotero\storage\RDCZEVE7\Drimbe e.a. - 2019 - Prime II1 factors arising from irreducible lattice.pdf}
}

@phdthesis{greenGraphProductsGroups1990,
  title = {Graph Products of Groups},
  author = {Green, Elisabeth Ruth},
  year = {1990},
  urldate = {2022-08-24},
  abstract = {In the 1970's Baudisch introduced the idea of the semifree group, that is, a group in which the only relators are commutators of generators. Baudisch was mainly concerned with subgroup problems, employing length arguments on the elements of these groups. More recently Droms and Servatius have continued the study of semifree, or graph groups, as they call them. They answer some of the questions left open by the work of Baudisch. It is possible to take the graph analogy a level higher and study graph products of groups, which not only generalise graph groups, but also free and direct products. In this thesis we seek to explore the properties of graph products of groups. After some preliminaries in chapter 1, chapter 2 quotes the main results from the work of Baudisch, Droms and Servatius on graph groups, and includes a few elementary results. In chapter 3 we show that many of the well known results about free products and direct products will generalise to graph products. We also extend some of the results on graph groups and give a counter example to a plausible conjecture. We develop a normal form for elements in graph products and, with the use of a generalised free product representation, show solvability of the word and conjugacy problems. In-chapter 4 we examine the concept of graphological indecomposability. Having disposed of an obvious conjecture by way of a counter example we present a number of isomorphism theorems. Chapter 5 is devoted to residual properties of graph products. Much work has been done by Stebe, Allenby and others on residual finiteness, conjugacy separability and potency of free groups and free products. We generalise some of these results. Finally, in chapter 6 we return to graph groups for a look at the Freiheitssatz. Various subclasses have been covered by Pride, Baumslag and Howie, and we seek to extend their results.},
  langid = {english},
  school = {University of Leeds},
  file = {C\:\\Users\\mborst6\\Zotero\\storage\\62H9M4WV\\Green - 1990 - Graph products of groups.pdf;C\:\\Users\\mborst6\\Zotero\\storage\\FQMX5H2H\\236.html}
}

@phdthesis{MatthijsBorstThesis,
  title = {On Rigidity Theory, Strong Solidity, Coxeter Groups, Graph Products and Commutator Estimates},
  author = {Borst, Matthijs},
  year = {2025},
  
  
  school = {Delft University of Technology},
  
}

@article {hayesRandomMatrixApproach2022,
    AUTHOR = {Hayes, Ben},
     TITLE = {A random matrix approach to the {P}eterson-{T}hom conjecture},
   JOURNAL = {Indiana Univ. Math. J.},
  FJOURNAL = {Indiana University Mathematics Journal},
    VOLUME = {71},
      YEAR = {2022},
    NUMBER = {3},
     PAGES = {1243--1297},
      ISSN = {0022-2518,1943-5258},
   MRCLASS = {46L10 (46L54 60B20)},
  MRNUMBER = {4448584},
MRREVIEWER = {Stanis\l aw\ Goldstein},
}

@article {ConnesClassificationInjectiveFactors,
    AUTHOR = {Connes, A.},
     TITLE = {Classification of injective factors. {C}ases {$II\sb{1},$}
              {$II\sb{\infty },$} {$III\sb{\lambda },$} {$\lambda \not=1$}},
   JOURNAL = {Ann. of Math. (2)},
  FJOURNAL = {Annals of Mathematics. Second Series},
    VOLUME = {104},
      YEAR = {1976},
    NUMBER = {1},
     PAGES = {73--115},
      ISSN = {0003-486X},
   MRCLASS = {46L10},
  MRNUMBER = {454659},
MRREVIEWER = {Fran\c cois\ Combes},
       DOI = {10.2307/1971057},
       URL = {https://doi.org/10.2307/1971057},
}

@article{ioanaAmalgamatedFreeProducts2008,
  title = {Amalgamated Free Products of Weakly Rigid Factors and Calculation of Their Symmetry Groups},
  author = {Ioana, Adrian and Peterson, Jesse and Popa, Sorin},
  year = {2008},
  month = jan,
  journal = {Acta Mathematica},
  volume = {200},
  number = {1},
  pages = {85--153},
  publisher = {Institut Mittag-Leffler},
  issn = {0001-5962, 1871-2509},
  doi = {10.1007/s11511-008-0024-5},
  urldate = {2023-10-23},
  abstract = {We consider amalgamated free product II1 factors M = M1*BM2*B{\dots} and use ``deformation/rigidity'' and ``intertwining'' techniques to prove that any relatively rigid von Neumann subalgebra Q {$\subset$} M can be unitarily conjugated into one of the Mi's. We apply this to the case where the Mi's are w-rigid II1 factors, with B equal to either C, to a Cartan subalgebra A in Mi, or to a regular hyperfinite II1 subfactor R in Mi, to obtain the following type of unique decomposition results, {\`a}la Bass--Serre: If M = (N1 * CN2*C{\dots})t, for some t {$>$} 0 and some other similar inclusions of algebras C {$\subset$} Ni then, after a permutation of indices, (B {$\subset$} Mi) is inner conjugate to (C {$\subset$} Ni)t, for all i. Taking B = C and \$ M\_\{i\} = \{{\textbackslash}left( \{L\{{\textbackslash}left( \{Z{\textasciicircum}\{2\} {\textbackslash}rtimes F\_\{2\} \} {\textbackslash}right)\}\} {\textbackslash}right)\}{\textasciicircum}\{\{t\_\{i\} \}\} \$, with \{ti\}i⩾1 = S a given countable subgroup of R+*, we obtain continuously many non-stably isomorphic factors M with fundamental group \$ \{{\textbackslash}user1\{{\textbackslash}mathcal\{F\}\}\}\{{\textbackslash}left( M {\textbackslash}right)\} \$ equal to S. For B = A, we obtain a new class of factors M with unique Cartan subalgebra decomposition, with a large subclass satisfying \$ \{{\textbackslash}user1\{{\textbackslash}mathcal\{F\}\}\}\{{\textbackslash}left( M {\textbackslash}right)\} = \{{\textbackslash}left{\textbackslash}\{ 1 {\textbackslash}right{\textbackslash}\}\} \$ and Out(M) abelian and calculable. Taking B = R, we get examples of factors with \$ \{{\textbackslash}user1\{{\textbackslash}mathcal\{F\}\}\}\{{\textbackslash}left( M {\textbackslash}right)\} = \{{\textbackslash}left{\textbackslash}\{ 1 {\textbackslash}right{\textbackslash}\}\} \$, Out(M) = K, for any given separable compact abelian group K.},
  file = {C:\Users\mborst6\Zotero\storage\CMB58PAP\Ioana e.a. - 2008 - Amalgamated free products of weakly rigid factors .pdf}
}

@article{mlotkowskiLfreeProbability2004,
  title = {{$\Lambda$}-Free Probability},
  author = {M\l{}otkowski, Wojciech},
  year = {2004},
  month = mar,
  journal = {Infinite Dimensional Analysis, Quan- tum Probability and Related Topics},
  volume = {07},
  number = {01},
  pages = {27--41},
  publisher = {World Scientific Publishing Co.},
  issn = {0219-0257},
  doi = {10.1142/S0219025704001517},
  urldate = {2024-06-11},
  abstract = {We introduce and study a notion of {$\Lambda$}-freeness, which generalises both freeness and independence in the context of noncommutative probability. In particular, we extend Voiculescu's construction of the free product of representations of unital *-algebras. A central limit theorem is also proved.},
  keywords = {free product,noncommutative probability space,Star-algebra,star-representation,state}
}

@book{stratilaLecturesNeumannAlgebras2019,
  title = {Lectures on von {{Neumann}} Algebras},
  author = {Str{\u a}til{\u a}, {\textcommabelow S}erban and Zsid{\'o}, L{\'a}szl{\'o}},
  year = {2019},
  series = {Cambridge - {{IISc}} Series},
  edition = {Second edition},
  publisher = {Cambridge University Press},
  address = {Cambridge ; New York, NY},
  isbn = {978-1-108-49684-1},
  langid = {english},
  lccn = {QA326 .S7613 2019},
  keywords = {Hilbert space,Von Neumann algebras},
  file = {C:\Users\mborst6\Zotero\storage\WIDE4NEM\Strătilă en Zsidó - 2019 - Lectures on von Neumann algebras.pdf}
}

@article{vaesExplicitComputationsAll2008,
  title = {Explicit Computations of All Finite Index Bimodules for a Family of II$_1$  Factors},
  author = {Vaes, Stefaan},
  year = {2008},
  journal = {Annales scientifiques de l'{\'E}cole normale sup{\'e}rieure},
  volume = {41},
  number = {5},
  pages = {743--788},
  issn = {0012-9593, 1873-2151},
  doi = {10.24033/asens.2081},
  urldate = {2023-09-26},
  abstract = {We study II1 factors M and N associated with good generalized Bernoulli actions of groups having an infinite almost normal subgroup with the relative property (T). We prove the following rigidity result : every finite index M -N -bimodule (in particular, every isomorphism between M and N ) is described by a commensurability of the groups involved and a commensurability of their actions. The fusion algebra of finite index M -M -bimodules is identified with an extended Hecke fusion algebra, providing the first explicit computations of the fusion algebra of a II1 factor. We obtain in particular explicit examples of II1 factors with trivial fusion algebra, i.e. only having trivial finite index subfactors.},
  langid = {english},
  file = {C:\Users\mborst6\Zotero\storage\EAJZBVWQ\Vaes - 2008 - Explicit computations of all finite index bimodule.pdf}
}

@article {HayesJekelElayavalli,
    AUTHOR = {Hayes, Ben and Jekel, David and Kunnawalkam Elayavalli,
              Srivatsav},
     TITLE = {Consequences of the random matrix solution to the
              {P}eterson-{T}hom conjecture},
   JOURNAL = {Anal. PDE},
  FJOURNAL = {Analysis \& PDE},
    VOLUME = {18},
      YEAR = {2025},
    NUMBER = {7},
     PAGES = {1805--1834},
      ISSN = {2157-5045,1948-206X},
   MRCLASS = {46L09 (46L53 46L54)},
  MRNUMBER = {4930595},
       DOI = {10.2140/apde.2025.18.1805},
       URL = {https://doi-org.tudelft.idm.oclc.org/10.2140/apde.2025.18.1805},
}

@article {BelinschiCapitaine22,
    AUTHOR = {Serban Belinschi and Mireille Capitaine},
     TITLE = {Strong convergence of tensor products of independent G.U.E. matrices},
   JOURNAL = {arXiv:2205.07695},
      YEAR = {2022},
   
       DOI = {	arXiv:2205.07695},
       URL = {
https://doi.org/10.48550/arXiv.2205.07695
},
}

@article {BordenaveCollins23,
    AUTHOR = {Bordenave, Charles and Collins, Benoit },
     TITLE = {Norm of matrix-valued polynomials in random unitaries and permutations},
   JOURNAL = {arXiv:2304.05714},
      YEAR = {2023},
   
       DOI = {arXiv:2304.05714},
       URL = {
https://doi.org/10.48550/arXiv.2304.05714


},
}

@article {Radulescu94,
    AUTHOR = {R{\u a}dulescu, Florin},
     TITLE = {Random matrices, amalgamated free products and subfactors of
              the von {N}eumann algebra of a free group, of noninteger
              index},
   JOURNAL = {Invent. Math.},
  FJOURNAL = {Inventiones Mathematicae},
    VOLUME = {115},
      YEAR = {1994},
    NUMBER = {2},
     PAGES = {347--389},
      ISSN = {0020-9910,1432-1297},
   MRCLASS = {46L35 (46L50)},
  MRNUMBER = {1258909},
MRREVIEWER = {Paul\ Jolissaint},
       DOI = {10.1007/BF01231764},
       URL = {https://doi.org/10.1007/BF01231764},
}

@book {Takesaki1,
    AUTHOR = {Takesaki, Masamichi},
     TITLE = {Theory of operator algebras. {I}},
 PUBLISHER = {Springer-Verlag, New York-Heidelberg},
      YEAR = {1979},
     PAGES = {vii+415},
      ISBN = {0-387-90391-7},
   MRCLASS = {46Lxx},
  MRNUMBER = {548728},
MRREVIEWER = {Christopher\ Lance},
}

@article {BCC,
    AUTHOR = {Borst, Matthijs and   Caspers,Martijn and  Chen,Enli},
     TITLE = {Rigid Graph Products},
   JOURNAL = {arXiv: 2408.06171},
 year = {2024},   
}

@article {BC,
    AUTHOR = {Borst, Matthijs and Caspers, Martijn},
     TITLE = {Classification of right-angled {C}oxeter groups with a
              strongly solid von {N}eumann algebra},
   JOURNAL = {J. Math. Pures Appl. (9)},
  FJOURNAL = {Journal de Math\'ematiques Pures et Appliqu\'ees. Neuvi\`eme
              S\'erie},
    VOLUME = {189},
      YEAR = {2024},
     PAGES = {Paper No. 103591, 14},
      ISSN = {0021-7824,1776-3371},
   MRCLASS = {46L10 (20F55 22D50)},
  MRNUMBER = {4779391},
MRREVIEWER = {Paul\ Jolissaint},
       DOI = {10.1016/j.matpur.2024.06.006},
       URL = {https://doi.org/10.1016/j.matpur.2024.06.006},
}

@article {DykemaJFA1992,
    AUTHOR = {Dykema, Ken},
     TITLE = {On certain free product factors via an extended matrix model},
   JOURNAL = {J. Funct. Anal.},
  FJOURNAL = {Journal of Functional Analysis},
    VOLUME = {112},
      YEAR = {1993},
    NUMBER = {1},
     PAGES = {31--60},
      ISSN = {0022-1236,1096-0783},
   MRCLASS = {46L50 (46L37 81S25)},
  MRNUMBER = {1207936},
       DOI = {10.1006/jfan.1993.1025},
       URL = {https://doi.org/10.1006/jfan.1993.1025},
}

@article {DykemaPJM1994,
    AUTHOR = {Dykema, Ken},
     TITLE = {Interpolated free group factors},
   JOURNAL = {Pacific J. Math.},
  FJOURNAL = {Pacific Journal of Mathematics},
    VOLUME = {163},
      YEAR = {1994},
    NUMBER = {1},
     PAGES = {123--135},
      ISSN = {0030-8730,1945-5844},
   MRCLASS = {46L35 (46L50)},
  MRNUMBER = {1256179},
MRREVIEWER = {Marie\ Choda},
       URL = {http://projecteuclid.org/euclid.pjm/1102622631},
}

@article {Popa2006a,
    AUTHOR = {Popa, Sorin},
     TITLE = {Strong rigidity of {$\rm II_1$} factors arising from malleable
              actions of {$w$}-rigid groups. {I}},
   JOURNAL = {Invent. Math.},
  FJOURNAL = {Inventiones Mathematicae},
    VOLUME = {165},
      YEAR = {2006},
    NUMBER = {2},
     PAGES = {369--408},
      ISSN = {0020-9910,1432-1297},
   MRCLASS = {46L10 (22D25 37A20 46L55)},
  MRNUMBER = {2231961},
MRREVIEWER = {Alain\ Valette},
       DOI = {10.1007/s00222-006-0501-4},
       URL = {https://doi.org/10.1007/s00222-006-0501-4},
}

@article {Popa2006b,
    AUTHOR = {Popa, Sorin},
     TITLE = {Strong rigidity of {$\rm II_1$} factors arising from malleable
              actions of {$w$}-rigid groups. {II}},
   JOURNAL = {Invent. Math.},
  FJOURNAL = {Inventiones Mathematicae},
    VOLUME = {165},
      YEAR = {2006},
    NUMBER = {2},
     PAGES = {409--451},
      ISSN = {0020-9910,1432-1297},
   MRCLASS = {46L55 (22D25 37A20 37A55 46L10)},
  MRNUMBER = {2231962},
MRREVIEWER = {Alain\ Valette},
       DOI = {10.1007/s00222-006-0502-3},
       URL = {https://doi.org/10.1007/s00222-006-0502-3},
}

\end{document}